\renewcommand{\to}[1][]{\xrightarrow{\ #1\ }}
\newcommand{\leftidx}[3]{{\vphantom{#2}}#1#2#3}     
\newcommand{\lstar}[1]{\leftidx{^*}{#1}{}}
\newcommand{\ltightstar}[1]{\leftidx{^*}{\negthinspace#1}{}}
\newcommand{\forget}[1]{}  
\renewcommand{\theenumi}{\@roman\c@enumi}
\renewcommand{\phi}{\varphi}
\renewcommand{\epsilon}{\varepsilon}
\renewcommand{\theta}{\vartheta}
\newcommand{\sh}{\operatorname{sh}}
\newcommand{\llbracket}{[\negthinspace[}
\newcommand{\rrbracket}{]\negthinspace]}
\def\NN{{\mathbb N}}
\def\AAA{{\mathbb A}}
\def\RR{{\mathbb R}}
\def\QQ{{\mathbb Q}}
\def\cU{\mathcal{U}}
\def\fra{\mathfrak{a}}
\def\frb{\mathfrak{b}}
\def\frm{\mathfrak{m}}
\def\frq{\mathfrak{q}}
 \DeclareMathOperator{\Spec}{Spec}
 \DeclareMathOperator{\fpt}{fpt}
\newtheorem{lemma}{Lemma}[section]
\newtheorem{theorem}[lemma]{Theorem}
\newtheorem{corollary}[lemma]{Corollary}
\newtheorem{proposition}[lemma]{Proposition}
\newtheorem{conjecture}[lemma]{Conjecture}
\theoremstyle{definition}
\newtheorem{remark}[lemma]{Remark}
\theoremstyle{remark}
\newtheorem*{remark*}{Remark}
\newtheorem*{note*}{Note}
\begin{document}

\title{$F$-thresholds of hypersurfaces}

\author[M.~Blickle]{Manuel~Blickle}
\address{Fachbereich Mathematik, Universit\"at  Duisburg-Essen, Standort Essen,
45117 Essen, Germany} \email{{\tt manuel.blickle@uni-essen.de}}

\author[M. Musta\c{t}\v{a}]{Mircea~Musta\c{t}\v{a}}
\address{Department of Mathematics, University of Michigan,
Ann Arbor, MI 48109, USA} \email{{\tt mmustata@umich.edu}}

\author[K.~E.~Smith]{Karen~E.~Smith}
\address{Department of Mathematics, University of Michigan,
Ann Arbor, MI 48109, USA} \email{{\tt kesmith@umich.edu}}

\thanks{2000\,\emph{Mathematics Subject Classification}.
 Primary 13A35; Secondary 14B05.
\newline The second author
  was partially supported by the NSF under grants DMS 0500127 and
   DMS 0111298, and by a Packard Fellowship}
\keywords{$F$-thresholds, test ideals, $F$-modules, non-standard
extension}

\maketitle

\markboth{M.~Blickle, M.~Musta\c t\u a and
K.~E.~Smith}{$F$-thresholds of hypersurfaces}

\section{Introduction}

In characteristic zero one can define invariants of singularities
using \emph{all} divisors over the ambient variety. A key result
that makes these invariants computable says that they can be
determined by the divisors on a resolution of singularities. For
example, if $\fra$ is a sheaf of ideals on a nonsingular variety,
then to every nonnegative real number $\lambda$ one associates the
multiplier ideal ${\mathcal J}(\fra^{\lambda})$. The jumping
exponents of $\fra$ are those $\lambda$ such that ${\mathcal
J}(\fra^{\lambda}) \neq {\mathcal J}(\fra^{\lambda'})$ for every
$\lambda'<\lambda$. It is an easy consequence of the formula giving
the multiplier ideals of $f$ in terms of a log resolution of
singularities, that the jumping exponents form a discrete set of
rational numbers. See for example \cite{Laz}, Ch. 9 for the basic
facts about multiplier ideals and their jumping exponents.

In positive characteristic Hara and Yoshida defined in \cite{HY} an
analogue of the multiplier ideals, the (generalized) test ideals.
The definition works in a very general setting, involving a notion
of tight closure for pairs. In this paper, however, we assume that
we work in a \emph{regular} ring $R$ of characteristic $p>0$ that is
$F$-finite, i.e. such that the Frobenius morphism $F\colon R\to R$
is finite. If $\fra$ is an ideal in $R$ and if $\lambda$ is a
nonnegative real number, then the corresponding test ideal is
denoted by $\tau(\fra^{\lambda})$. In this context we say that
$\lambda$ is an $F$-jumping exponent (or an $F$-threshold) if
$\tau(\fra^{\lambda})\neq\tau(\fra^{\lambda'})$ for every
$\lambda'<\lambda$. The following is our main result about
$F$-jumping exponents in positive characteristic.

\begin{theorem}\label{theorem_introduction}
If $R$ is an $F$-finite regular ring, and if $\fra=(f)$ is a
principal ideal, then the $F$-jumping exponents of $\fra$ are
rational, and they form a discrete set.
\end{theorem}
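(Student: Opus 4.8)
The plan is to reduce the theorem to a \emph{finiteness} statement on a bounded interval, and then to read off rationality and discreteness from the interaction between test ideals and the Frobenius. First I would invoke Skoda's theorem, which in a regular ring gives $\tau(f^{\lambda})=f\cdot\tau(f^{\lambda-1})$ for every real $\lambda\ge 1$. Since multiplication by $f$ is injective, $\lambda$ is an $F$-jumping exponent of $(f)$ if and only if $\lambda-1$ is, for $\lambda>1$; hence the set of $F$-jumping exponents is invariant under integer translations in the relevant range, and it is enough to understand $\lambda\in(0,1]$. Because $\lambda\mapsto\tau(f^{\lambda})$ is non-increasing, its level sets on $(0,1]$ are intervals, so discreteness on $(0,1]$ will follow once we know that only finitely many ideals occur as $\tau(f^{\lambda})$, $\lambda\in(0,1]$. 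For the remaining analysis I would use the description of test ideals via the Frobenius available in an $F$-finite regular ring: for an ideal $\frb$ and $e\ge 1$ there is a smallest ideal $\frb^{[1/p^{e}]}$ whose $e$-th Frobenius power contains $\frb$, and $\tau(f^{\lambda})=\bigcup_{e}\bigl(f^{\rdup{\lambda p^{e}}}\bigr)^{[1/p^{e}]}$, an ascending union that stabilizes for $e\gg 0$.

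Next I would set up the arithmetic behind rationality. Using the Frobenius description together with the duality between $F$-thresholds and test ideals, one shows that $\lambda\in(0,1]$ is an $F$-jumping exponent precisely when $\lambda=c^{J}(f)$ with $J=\tau(f^{\lambda})$, where $c^{J}(f):=\lim_{e}\nu^{J}(p^{e})/p^{e}$ and $\nu^{J}(p^{e}):=\max\{r:f^{r}\notin J^{[p^{e}]}\}$ (this is finite since $f\in J$ for $\lambda\le 1$, so $c^{J}(f)\le 1$). Flatness of the Frobenius, through the identity $(J^{[p^{e}]}:g^{p^{e}})=(J:g)^{[p^{e}]}$, yields $p\,\nu^{J}(p^{e})\le\nu^{J}(p^{e+1})\le p\,\nu^{J}(p^{e})+p-1$; writing $a_{e}=\nu^{J}(p^{e})$ and $K_{e}=(J^{[p^{e}]}:f^{a_{e}})$, the same identity gives the deterministic recurrence $a_{e+1}=p\,a_{e}+c_{e}$, $K_{e+1}=(K_{e}^{[p]}:f^{c_{e}})$, where $c_{e}=\max\{j\ge 0:f^{j}\notin K_{e}^{[p]}\}\in\{0,\dots,p-1\}$ is determined by $K_{e}$. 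Consequently, \emph{if} the orbit $\{K_{e}\}_{e\ge1}$ is finite, it is eventually periodic, hence so is $(c_{e})$, say with period $d$ from $e_{0}$ on; then $a_{e_{0}+kd}=p^{kd}a_{e_{0}}+c\,(p^{kd}-1)/(p^{d}-1)$ for a fixed integer $c$, and letting $k\to\infty$ gives $\lambda=c^{J}(f)=\bigl(a_{e_{0}}(p^{d}-1)+c\bigr)\big/\bigl(p^{e_{0}}(p^{d}-1)\bigr)\in\QQ$.

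The crux of the proof — and the step I expect to be the main obstacle — is precisely this finiteness: that only finitely many ideals occur among the $\tau(f^{\lambda})$, $\lambda\in(0,1]$, equivalently that the orbit $\{K_{e}\}$ is finite. I would deduce it from the theory of unit $F$-modules: the relevant test ideals are controlled by the $F$-submodules of a finitely generated unit $F$-module attached to $f$ (such as $R_{f}/R$ with its natural Frobenius action), which, by Lyubeznik's theorem, has finite length as an $F$-module (possibly after a standard reduction to a setting where his theory applies). Passing to a non-standard extension $\lstar{R}$ of $R$ and working at a single infinite level $q=p^{N}$, where all the unions ``$\bigcup_{e}$'' have already stabilized, is a convenient device for making the stabilization uniform and for handling real — in particular irrational — exponents $\lambda$ by the single computation $\tau(f^{\lambda})=\bigl(f^{\rdup{\lambda q}}\bigr)^{[1/q]}$. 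Granting the finiteness, discreteness on $(0,1]$ is immediate from monotonicity, rationality follows from the eventual periodicity argument above, and Skoda's theorem propagates both conclusions from $(0,1]$ to all of $[0,\infty)$, finishing the proof.
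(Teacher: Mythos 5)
Your plan correctly identifies the easy reductions (Skoda to reduce to $\lambda\in(0,1]$, stability under $\lambda\mapsto\{p\lambda\}$ to get rationality once discreteness is known, and the recursion $a_{e+1}=p\,a_e+c_e$ giving eventual periodicity and hence rationality if the orbit of ideals is finite). However, the step you yourself flag as the crux — that only finitely many ideals occur among $\tau(f^\lambda)$ for $\lambda\in(0,1]$ — is exactly equivalent to the discreteness you are trying to prove, and the route you sketch does not actually deliver it. The ideals $\tau(f^\lambda)$ (or your $K_e$) are not $D_R$-submodules or $F$-submodules of a \emph{single} unit $F$-module such as $R_f$ or $R_f/R$; there is no one finitely generated unit module whose $D_R$-submodule lattice indexes all of them. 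Lyubeznik's finite-length theorem therefore cannot be invoked in one shot to bound the whole family.

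What the paper does instead is local to each candidate accumulation point. For a rational $\alpha=r/(p^e-1)$ it constructs a unit $R[F^e]$-module $M_\alpha$ (isomorphic to $R_f$ as an $R$-module but with a \emph{twisted} Frobenius, $F^e(e_\alpha)=f^{-r}e_\alpha$) and translates, via Lemma~\ref{lem1} and Lemma~\ref{lem3}, the stabilization $\tau(f^{\alpha_m})=\tau(f^{\alpha_{m+1}})$ into the statement that $e_\alpha$ is a $D_R$-generator of $M_\alpha$. Lyubeznik's theorem is then applied to this single module $M_\alpha$ (after reducing to $R$ local via Corollary~\ref{reduction_to_local}) to show $e_\alpha$ generates. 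This only rules out accumulation at rationals with denominator prime to $p$; the paper then runs an extra induction on $\dim R$, using Lemma~\ref{key}(ii) to move a hypothetical accumulation point to a proper localization, concluding by induction that it would be rational, then clearing $p$ from its denominator to reach a contradiction. Your proposal skips both the $\alpha$-by-$\alpha$ $M_\alpha$ device and the dimension induction, and neither is recoverable from the unit-module finiteness of $R_f/R$ alone. (The nonstandard-extension device you mention is also a red herring for this theorem — the paper uses it only in Section~\ref{sec.limFthr} for limits of $F$-pure thresholds, not here.)
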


The discreteness and the rationality of $F$-jumping numbers has been
proved in \cite{BMS} for \emph{every} ideal when the ring $R$ is
essentially of finite type over an $F$-finite field. We mention also
that for $R=k\llbracket x,y\rrbracket$, with $k$ a finite field,
 the above result has been proved in \cite{Ha} using a completely
different approach.

 We stress that the difficulty in attacking this
result does not come from the fact that there is no available
resolution of singularities in positive characteristic. Even in
cases when such a resolution is known to exist, the $F$-jumping
exponents are not simply given in terms of the numerical information
of the resolution. We refer to \cite{MTW} for a discussion of the
known and conjectural connections between the invariants in
characteristic zero and those in characteristic $p$.

In order to prove Theorem~\ref{theorem_introduction} it is enough to
show that the set of $F$-jumping exponents is discrete. The
rationality statement
 follows as in \cite{BMS}: it is
enough to use the fact that if $\lambda$ is an $F$-jumping exponent,
then so are the fractional parts of $p^e\lambda$, for all $e\geq 1$.
Moreover, we will see that it is enough to prove the result in the
case when $R$ is local.

The crucial step in the proof of the theorem relies on showing that
if $\alpha$ is a \emph{rational number}, then $\alpha$ is not an
accumulation point of $F$-jumping exponents of $f$ (irrational
$\alpha$'s are excluded by an inductive argument). The key point in
this step is that (after preparing $\alpha$) we may rephrase the
statement that ``$\alpha$ is not an accumulation point of
$F$-jumping exponents'' as the statement that ``a certain element
$e_\alpha$ of a certain $D_R$--module $M_\alpha$ (which can be
thought of as the $D_R$--module generated by $\frac{1}{f^\alpha}$,
see the paragraph before Lemma \ref{lem2}) is a $D_R$-generator of
$M_\alpha$'' (Corollary \ref{cor1}). Here $D_R$ denotes the ring of
all differential operators of $R$. This $D_R$-module reformulation
is an extension of an argument due to Alvarez-Montaner, Blickle and
Lyubeznik from \cite{AMBL} (one can interpret the main result in
\emph{loc. cit.} as the case $\alpha=1$, when $M_{\alpha}=R_f$).
Since we may assume that $R$ is local, one then finishes the
argument as in \emph{loc. cit.} by using the fact that $M_\alpha$
has finite length as a $D_R$-module (see \cite{Lyu}) to conclude
that $e_\alpha$ indeed generates $M_\alpha$ as a $D_R$-module
(Theorem \ref{thm4}). This argument is carried out in detail in
Section \ref{sec.ratdisc} where also the necessary background and
notation is recalled.

The second half of the paper deals with limits of $F$-pure
thresholds. We apply our rationality result for formal power series
to deduce that every such limit is a rational number. Recall that
the $F$-pure threshold of $\fra$ is the smallest (positive)
$F$-jumping exponent of $\fra$. This invariant has been introduced
by Takagi and Watanabe in \cite{TW} who pointed out the analogy with
the log canonical threshold in characteristic zero. In a fixed
characteristic $p$, we consider the set ${\mathcal T}_n$ consisting
of all $F$-pure thresholds of principal ideals in regular $F$-finite
rings of characteristic $p$ and  dimension $\leq n$. We consider
also the set ${\mathcal T}_n^{\circ}$ of $F$-pure thresholds at the
origin for polynomials $f\in k[x_1,\ldots,x_n]$, where $k$ is an
algebraically closed field of characteristic $p$ (the definition
does not depend on $k$). It is easy to see that every element in
${\mathcal T}_n$ can be computed as the $F$-pure threshold of a
formal power series $f$, and therefore it is the limit of the
$F$-pure thresholds of the various truncations of $f$. Conversely,
we show that every limit of $F$-pure thresholds in bounded dimension
is the $F$-pure threshold of some formal power series.

\begin{theorem}\label{theorem3_introduction}
For every prime $p>0$ and every $n\geq 1$, the set ${\mathcal T}_n$
is the closure of ${\mathcal T}_n^{\circ}$
  In particular, every limit of $F$-pure thresholds of principal ideals
in $F$-finite regular rings of bounded dimension is a rational
number.
\end{theorem}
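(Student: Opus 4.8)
The plan is to prove the two inclusions $\overline{\mathcal T_n^\circ}\subseteq\mathcal T_n$ and $\mathcal T_n\subseteq\overline{\mathcal T_n^\circ}$ separately; the ``in particular'' clause is then immediate, since $\mathcal T_n^\circ$ consists of $F$-pure thresholds of polynomials, which are rational numbers (each such threshold is an $F$-jumping exponent, hence rational by Theorem \ref{theorem_introduction}), and a limit of elements of $\mathcal T_n$ lies in $\mathcal T_n=\overline{\mathcal T_n^\circ}$, which by the rationality result for power series (again Theorem \ref{theorem_introduction}) is contained in $\mathbb Q$. For the inclusion $\mathcal T_n^\circ\subseteq\mathcal T_n$ I would simply observe that a polynomial $f\in k[x_1,\dots,x_n]$ and its localization at the origin have the same $F$-pure threshold at the origin, and the latter is the $F$-pure threshold of a principal ideal in a regular $F$-finite ring of dimension $\leq n$; since $\mathcal T_n$ is closed (this is precisely the content of the discreteness half of our main results, applied uniformly — or, more carefully, one shows $\mathcal T_n$ is closed as part of the argument below), we get $\overline{\mathcal T_n^\circ}\subseteq\mathcal T_n$.

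The heart of the matter is the reverse inclusion: every $F$-pure threshold occurring in dimension $\leq n$ is a limit of $F$-pure thresholds at the origin of polynomials in $n$ variables. First I would reduce to the local case: if $\mathfrak a=(f)$ in a regular $F$-finite ring $R$ of dimension $\leq n$, then the $F$-pure threshold of $(f)$ equals the $F$-pure threshold of $(f)$ in some localization $R_{\mathfrak m}$, and passing to the completion $\widehat{R_{\mathfrak m}}$ does not change it either (test ideals commute with completion for regular $F$-finite local rings). By Cohen's structure theorem $\widehat{R_{\mathfrak m}}\cong k\llbracket x_1,\dots,x_d\rrbracket$ with $d\leq n$ and $k$ perfect (one may even arrange $k$ algebraically closed after a further harmless base change, since the $F$-pure threshold is insensitive to algebraic extensions of the residue field). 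So every element of $\mathcal T_n$ is the $F$-pure threshold at the origin of a formal power series $f\in k\llbracket x_1,\dots,x_n\rrbracket$.

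The final and most substantial step is to show that $\mathrm{fpt}(f)=\lim_{N\to\infty}\mathrm{fpt}(f_{\leq N})$, where $f_{\leq N}$ denotes the truncation of $f$ to terms of degree $\leq N$; this exhibits $\mathrm{fpt}(f)$ as a limit of elements of $\mathcal T_n^\circ$. The inequality $\mathrm{fpt}(f)\leq\liminf_N\mathrm{fpt}(f_{\leq N})$ should follow from semicontinuity: $f\equiv f_{\leq N}$ modulo $\mathfrak m^{N+1}$, and for the purpose of computing the test ideals $\tau(f^\lambda)$ and $\tau(f_{\leq N}^\lambda)$ at a fixed threshold $\lambda$ only finitely many Frobenius powers — hence only finitely much of $f$ — matter, so for $N\gg 0$ (depending on $\lambda$) the two test ideals agree; combined with upper semicontinuity of $\mathrm{fpt}$ under the reduction $f_{\leq N}\rightsquigarrow f$, this pins the limit. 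The reverse inequality $\limsup_N\mathrm{fpt}(f_{\leq N})\le\mathrm{fpt}(f)$ is where I expect the main obstacle: one needs uniform control, independent of $N$, showing $\mathrm{fpt}(f_{\leq N})$ cannot exceed $\mathrm{fpt}(f)$ by more than $o(1)$. The natural tool is the characterization of $\mathrm{fpt}$ via the largest $c$ with $f^{\lceil c(p^e-1)\rceil}\notin\mathfrak m^{[p^e]}$ (equivalently via $\nu_f(p^e)/(p^e-1)$), together with the observation that $f^j$ and $f_{\leq N}^j$ agree modulo $\mathfrak m^{N+1}$, so that for $e$ with $p^e>N$ one controls the relevant non-membership and gets $\nu_{f_{\leq N}}(p^e)\le\nu_f(p^e)+(\text{bounded error})$; letting first $e$ and then $N$ go to infinity gives the claim. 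Putting the two inequalities together yields $\mathrm{fpt}(f)=\lim_N\mathrm{fpt}(f_{\leq N})\in\overline{\mathcal T_n^\circ}$, completing the proof that $\mathcal T_n=\overline{\mathcal T_n^\circ}$.
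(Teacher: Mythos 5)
Your proposal correctly handles the easy inclusion $\mathcal T_n\subseteq\overline{\mathcal T_n^\circ}$: the reduction via localization and completion to $k\llbracket x_1,\dots,x_n\rrbracket$ by Cohen's structure theorem, and then the approximation of $\fpt(f)$ by $\fpt_0(f_{\leq d})$, is exactly the paper's Theorem \ref{thm5}(iv). (In fact you over-complicate this part: Corollary \ref{cor3} gives the two-sided bound $|\fpt(f)-\fpt_0(f_{\leq d})|\leq n/(d+1)$ directly, so there is no ``harder reverse inequality'' here.)

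The genuine gap is the other inclusion. To pass from $\mathcal T_n^\circ\subseteq\mathcal T_n$ to $\overline{\mathcal T_n^\circ}\subseteq\mathcal T_n$ you need $\mathcal T_n$ to be \emph{closed}, and your justification for this is not valid. You write that closedness is ``precisely the content of the discreteness half of our main results, applied uniformly,'' but Theorem \ref{theorem_introduction} is a statement about the $F$-jumping exponents of a \emph{single fixed} $f$; it says nothing about whether $F$-pure thresholds of \emph{varying} $f$'s in bounded dimension can accumulate outside $\mathcal T_n$. There is no uniformity to appeal to — indeed, whether $\mathcal T_n$ satisfies ACC is posed as an open Conjecture \ref{ACC} in this very paper, so ``discreteness'' across the family is far from known. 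Your fallback, that closedness is ``shown as part of the argument below,'' also does not hold: the argument below only proves $\mathcal T_n\subseteq\overline{\mathcal T_n^\circ}$, which combined with $\mathcal T_n^\circ\subseteq\mathcal T_n$ yields $\overline{\mathcal T_n}=\overline{\mathcal T_n^\circ}$ but not $\mathcal T_n=\overline{\mathcal T_n}$.

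What is missing is the central construction of the paper: given a sequence $f_m\in k[x_1,\dots,x_n]$ with $f_m(0)=0$ and $\fpt_0(f_m)\to\alpha$, one must produce a \emph{single} element of $\mathcal T_n$ equal to $\alpha$. The paper does this with a non-standard argument: form the internal hyperpolynomial $F=[f_m]\in\ltightstar{(k[x_1,\dots,x_n])}$ via a non-principal ultrafilter, restrict its coefficient function to standard multi-indices to get a genuine power series $f\in(\ltightstar{k})\llbracket x_1,\dots,x_n\rrbracket$, and then verify $\fpt(f)=\sh(\ltightstar{\fpt_0}(F))=\alpha$ using the truncation estimate together with the uniform bound on denominators of $F$-jumping exponents in bounded degree (Proposition 3.6 of \cite{BMS}). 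Without some construction of this kind — or a different argument producing a witness of the limit — the equality $\mathcal T_n=\overline{\mathcal T_n^\circ}$ is not established, and neither is the rationality of limits of $F$-pure thresholds.
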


The proof of Theorem~\ref{theorem3_introduction} uses nonstandard
methods to construct a power series whose $F$-pure threshold is the
limit of a given sequence of $F$-pure thresholds. The necessary
background for Theorem \ref{theorem3_introduction} and its proof are
given in Section \ref{sec.limFthr}. In the final Section
\ref{sec.rem} we record some peculiar features of $F$-pure
thresholds and test ideals, and state some open problems in analogy
with some well-known conjectures in birational geometry.

For an application of non-standard techniques to the study of log
canonical thresholds, see \cite{dFM}. In that case the non-standard
argument is more involved due to the fact that the definition of the
log canonical threshold is "less elementary". While the $F$-pure
threshold is a more subtle invariant than the log canonical
threshold, its definition is "simpler", and this pays off when using
non-standard extensions.

We believe that exploiting the connections and analogies between the
invariants in positive and zero characteristic can be very fruitful.
For example, results on test ideals such as the Subadditivity and
the Restriction Theorems are much easier to prove than for
multiplier ideals, and they imply their characteristic zero
counterpart by reduction mod $p$. Moreover, there are results on
multiplier ideals that so far have been proved only by reduction to
characteristic $p$ (see the work of Takagi \cite{Ta1} and
\cite{Ta2}). On the other hand, certain phenomena that are
well-understood (or just conjectural) in characteristic zero can
point to interesting phenomena in positive characteristic.

\subsection*{Acknowledgements}
The second author is indebted to Caucher Birkar for explaining him
the usefulness of non-standard methods. He would also like to thank
the Institute for Advanced Study, where part of this work has been
carried out.

\section{Discreteness and rationality}\label{sec.ratdisc}

We start by reviewing the definition and some basic properties of
the generalized test ideals from \cite{BMS}. Let $R$ be a regular
ring of characteristic $p>0$. We assume that $R$ is $F$-finite, that
is the Frobenius morphism $F\colon R\to R$, $F(u)=u^p$ is finite.
Note that $F$-finiteness is preserved by taking quotients,
localization and completion (see Example~2.1 in \cite{BMS}).
Moreover, if $R$ is $F$-finite then so are $R[x]$ and $R\llbracket
x\rrbracket$.

For an ideal $J$ and for $e\geq 1$, we put $J^{[p^e]}=(u^{p^e}\mid
u\in J)$. If $\frb$ is an arbitrary ideal in $R$, then we denote by
$\frb^{[1/p^e]}$ the (unique) minimal ideal $J$ such that
$\frb\subseteq J^{[p^e]}$.

Suppose now that $\fra$ is a fixed ideal in $R$ and $\lambda$ is a
positive real number. For every $e\geq 1$ we have
$$\left(\fra^{\lceil\lambda p^e\rceil}\right)^{[1/p^e]}\subseteq
\left(\fra^{\lceil\lambda p^{e+1}\rceil}\right)^{[1/p^{e+1}]},$$
where $\lceil u\rceil$ denotes the smallest integer $\geq u$. This
sequence of ideals stabilizes since $R$ is Noetherian, and the test
ideal is defined as $\tau(\fra^{\lambda}):=\left(\fra^{\lceil\lambda
p^e\rceil}\right)^{[1/p^e]}$ for $e\gg 0$.

Note that if $\lambda>\mu$, then $\tau(\fra^{\lambda})\subseteq
\tau(\fra^{\mu})$. It is shown in \cite{BMS} that for every
$\lambda$ there is $\epsilon>0$ such that
$$\tau(\fra^{\lambda})=\tau(\fra^{\lambda'})$$
for every $\lambda'\in [\lambda,\lambda+\epsilon)$. A positive
$\lambda$ is called an $F$-\emph{jumping exponent} of $\fra$ if
$\tau(\fra^{\lambda})\neq\tau(\fra^{\lambda'})$ for every
$\lambda'<\lambda$. It is convenient to make the convention that $0$
is an $F$-jumping exponent, too.

If $S$ is a multiplicative system in $R$, then
$\tau((S^{-1}\fra)^{\lambda})=S^{-1}\tau(\fra^{\lambda})$.
Similarly, if $R$ is local and $\widehat{R}$ its completion, then
$\tau((\fra\widehat{R})^{\lambda})
=\tau(\fra^{\lambda})\widehat{R}$. In particular, if $\lambda$ is an
$F$-jumping exponent for $S^{-1}\fra$ or for $\fra\widehat{R}$, then
it has to be an $F$-jumping exponent also for $\fra$.

Using the identification of $F$-jumping exponents as $F$-thresholds
one shows in \cite{BMS} that if $\lambda$ is an $F$-jumping
exponent, then so is $p\lambda$. Alternatively, this follows from a
strengthening of the Subadditivity Theorem in this context (see
Proposition~\ref{prop_subadditivity} below and the remark following
it).

From now on we specialize to the case of a principal ideal
$\fra=(f)$. In this case it is shown in \cite{BMS} that for every
$\lambda\geq 1$ we have
$\tau(f^{\lambda})=f\cdot\tau(f^{\lambda-1})$. This implies that if
$\lambda\geq 1$, then $\lambda$ is an $F$-jumping exponent of $f$ if
and only if $\lambda-1$ is such an exponent.

Combining the above two properties, it follows that if $\lambda$ is
an $F$-jumping exponent for $f$, then the fractional parts
$\{p^e\lambda\}$ are also $F$-jumping exponents for all $e\geq 1$.
Hence if we know that the $F$-jumping exponents of $f$ are discrete,
$\lambda$ has to be rational.

\begin{lemma}\label{lem0}
If $\lambda=\frac{m}{p^e}$ for some positive integer $m$, then
$\tau(f^{\lambda})=(f^m)^{[1/p^e]}$.
\end{lemma}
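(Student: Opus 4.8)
The plan is to show that the increasing sequence of ideals $J_N:=\left(f^{\lceil\lambda p^N\rceil}\right)^{[1/p^N]}$, $N\ge 1$, whose stable value defines $\tau(f^\lambda)$, has already reached that stable value at $N=e$, and that this value is $(f^m)^{[1/p^e]}$. Since $\lambda=m/p^e$, for every $i\ge 0$ we have $\lambda p^{e+i}=mp^i\in\ZZ$, so $\lceil\lambda p^{e+i}\rceil=mp^i$ and hence $J_{e+i}=\left(f^{mp^i}\right)^{[1/p^{e+i}]}$. Writing $\left(f^{mp^i}\right)=(f^m)^{[p^i]}$, the whole lemma reduces to the single identity
\[
\left(\frb^{[p^i]}\right)^{[1/p^{e+i}]}=\frb^{[1/p^e]}\qquad\text{for }\frb=(f^m),\ i\ge 0 .
\]

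First I would establish this identity; it is one of the basic properties of the operation $(-)^{[1/p^\bullet]}$ recorded in \cite{BMS}, and in the write-up I would simply cite it, but it is short to reprove from the definition of $\frb^{[1/p^e]}$ as the minimal ideal $J$ with $\frb\subseteq J^{[p^e]}$. For the inclusion $\subseteq$: if $K:=\frb^{[1/p^e]}$, then $\frb\subseteq K^{[p^e]}$ forces $\frb^{[p^i]}\subseteq K^{[p^{e+i}]}$, so minimality gives $\left(\frb^{[p^i]}\right)^{[1/p^{e+i}]}\subseteq K$. For $\supseteq$: if $L:=\left(\frb^{[p^i]}\right)^{[1/p^{e+i}]}$, then $\frb^{[p^i]}\subseteq L^{[p^{e+i}]}=\left(L^{[p^e]}\right)^{[p^i]}$, and here one invokes that $R$ is regular — the flatness of Frobenius gives the implication $A^{[p^i]}\subseteq B^{[p^i]}\Rightarrow A\subseteq B$ — to conclude $\frb\subseteq L^{[p^e]}$, whence $K\subseteq L$ by minimality of $K$. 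Granting the identity, every term $J_N$ with $N\ge e$ equals $(f^m)^{[1/p^e]}$; thus the sequence is eventually constant with this value, which is therefore $\tau(f^\lambda)$, as claimed.

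The only non-formal ingredient — and the step I would flag as the real content — is the implication $A^{[p^i]}\subseteq B^{[p^i]}\Rightarrow A\subseteq B$, which fails without the regularity hypothesis and is exactly the place where it is used; everything else is bookkeeping with the ceilings and with the definitions of $(-)^{[p^e]}$ and $(-)^{[1/p^e]}$.
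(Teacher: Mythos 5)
Your proof is correct and follows essentially the same route as the paper: both observe that $\lceil\lambda p^N\rceil=mp^{N-e}$ for $N\ge e$ and reduce to the identity $(\frb^{[p^i]})^{[1/p^{e+i}]}=\frb^{[1/p^e]}$, which comes from flatness of Frobenius. The only cosmetic difference is that you argue by two inclusions using minimality, whereas the paper packages the flatness fact as the equivalence $g\in J^{[p^{\ell}]}\Leftrightarrow g^p\in J^{[p^{\ell+1}]}$ and iterates the case $i=1$.
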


\begin{proof}
By definition, we have
$\tau(f^{\lambda})=\left(f^{mp^{e'-e}}\right)^{[1/p^{e'}]}$ for some
$e'\geq e$. Therefore it is enough to show that for every $g\in R$
and every $\ell\geq 1$ we have
$(g^p)^{[1/p^{\ell+1}]}=g^{[1/p^{\ell}]}$. This in turn follows from
the flatness of the Frobenius morphism: for an ideal $J$, we have
$g\in J^{[p^{\ell}]}$ if and only if $g^p\in J^{[p^{\ell+1}]}$.
\end{proof}

\medskip

We recall now some basic facts about $R[F^e]$-modules and
$D_R$-modules. For details we refer to \cite{Lyu} or \cite{Bli}.
Since $R$ is an $F$-finite regular ring, the ring of differential
operators $D_R\subseteq {\rm End}_{{\mathbb F}_p}(R)$ admits the
following description. For every $e\geq 0$, let $D_R^{e}={\rm
End}_{R^{p^e}}(R)$, hence $D_R^{0}=R$. We have $D_R^{e}\subseteq
D_R^{e+1}$ and
$$D_R=\bigcup_{e\in\NN}D_R^{e}.$$

By definition, $R$ has a canonical structure of left $D_R$-module.
Note also that if $S$ is a multiplicative system in $R$, then we
have a canonical isomorphism $S^{-1}(D_R^{e})\simeq
D_{S^{-1}R}^{e}$. The following lemma is a concrete special case of
so called Frobenius descent (see \cite{AMBL} for a fast
introduction) which states that the Frobenius functor induces an
equivalence of the category of $R$--modules and $D_R^e$--modules. In
this explicit case it shows the relevance of $D_R^{e}$-modules in
our setting.

\begin{lemma}\label{lem1}
The $D_R^{e}$-submodules of $R$ are the ideals of the form
$J^{[p^e]}$ for some ideal $J$. In particular, for every $\frb$, the
ideal $\left(\frb^{[1/p^e]}\right)^{[p^e]}$ is equal to the
$D_R^{e}$-submodule generated by $\frb$.
\end{lemma}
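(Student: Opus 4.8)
The plan is to use the concrete description $D_R^e = \operatorname{End}_{R^{p^e}}(R)$, together with the fact that $R$ is a free $R^{p^e}$-module of finite rank (this is where $F$-finiteness and regularity enter: $R$ regular and $F$-finite over the perfect-like base forces $R$ to be locally free, hence free after localizing, over $R^{p^e}$). The two assertions are essentially the same statement viewed from two sides, so I would prove the first and then deduce the ``in particular'' as a formal consequence.

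First I would show that every ideal of the form $J^{[p^e]}$ is a $D_R^e$-submodule of $R$. Since $J^{[p^e]} = (u^{p^e} \mid u \in J)$, an arbitrary element is $\sum_i r_i u_i^{p^e}$ with $r_i \in R$, $u_i \in J$; equivalently $J^{[p^e]} = R \cdot J^{p^e}$ where $J^{p^e} = (u^{p^e} : u\in J) \subseteq R^{p^e}$ is an ideal of the subring $R^{p^e}$. An $R^{p^e}$-linear endomorphism $\delta$ of $R$ then satisfies $\delta(r\cdot c) = \delta(r)\cdot c$ for $c = u^{p^e} \in R^{p^e}$, so $\delta(J^{[p^e]}) = \delta(R)\cdot J^{p^e} \subseteq R\cdot J^{p^e} = J^{[p^e]}$. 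Hence $J^{[p^e]}$ is stable under $D_R^e$.

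Conversely, suppose $N \subseteq R$ is a $D_R^e$-submodule. I want to show $N = J^{[p^e]}$ for some ideal $J$ — the natural candidate being $J = N^{[1/p^e]}$, but it is cleaner to argue directly via Frobenius descent. View $R$ as a free $R^{p^e}$-module with basis $e_1, \dots, e_s$ (one may take $e_1 = 1$). Then $D_R^e = \operatorname{End}_{R^{p^e}}(R)$ contains all the ``matrix units'', in particular the $R^{p^e}$-linear projections onto each basis coordinate and the maps permuting basis elements. Writing each element of $N$ in coordinates $n = \sum_j c_j(n) e_j$ with $c_j(n) \in R^{p^e}$, stability of $N$ under these endomorphisms forces $N$ to be of the form $\bigoplus_j \frb_j e_j$ for ideals $\frb_j$ of $R^{p^e}$, and stability under the basis-permuting maps forces all the $\frb_j$ to be equal to a single ideal $\frc \subseteq R^{p^e}$; thus $N = \frc \cdot R = (\frc R)$. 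Since $\frc$ is an ideal of $R^{p^e}$ and the Frobenius identifies $R^{p^e} \cong R$ via $u \mapsto u^{p^e}$, we have $\frc = J^{p^e}$ for the ideal $J \subseteq R$ corresponding to $\frc$, and then $N = R\cdot J^{p^e} = J^{[p^e]}$, as desired. (If one prefers to avoid choosing a basis globally, one runs this argument after localizing at each prime and checks the conclusion is local on $\Spec R$; since both ``$N$ is a $D_R^e$-submodule'' and ``$N = J^{[p^e]}$'' are local conditions — using $S^{-1}D_R^e = D_{S^{-1}R}^e$ recalled above and $S^{-1}(J^{[p^e]}) = (S^{-1}J)^{[p^e]}$ — this suffices.)

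For the ``in particular'' statement: by definition $\frb^{[1/p^e]}$ is the minimal ideal $J$ with $\frb \subseteq J^{[p^e]}$, so $\left(\frb^{[1/p^e]}\right)^{[p^e]}$ is a $D_R^e$-submodule (by the first part) containing $\frb$, hence it contains the $D_R^e$-submodule generated by $\frb$. For the reverse inclusion, let $N$ be the $D_R^e$-submodule generated by $\frb$; by the first part $N = J_0^{[p^e]}$ for some ideal $J_0$, and $\frb \subseteq N = J_0^{[p^e]}$, so by minimality $\frb^{[1/p^e]} \subseteq J_0$, whence $\left(\frb^{[1/p^e]}\right)^{[p^e]} \subseteq J_0^{[p^e]} = N$. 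The two inclusions give equality.

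The main obstacle I anticipate is the converse direction of the first assertion — showing a $D_R^e$-submodule must have the stated shape — which is exactly the content of Frobenius descent; the essential input is that $R$ is a finitely generated projective (hence locally free) $R^{p^e}$-module, so that $\operatorname{End}_{R^{p^e}}(R)$ is ``large enough'' (a full matrix algebra over $R^{p^e}$ locally) to pin down submodules, and this is precisely where the hypotheses that $R$ is regular and $F$-finite are used.
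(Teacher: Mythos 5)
Your proposal is correct and takes essentially the same approach as the paper: both reduce via localization to the case where $R$ is free over $R^{p^e}$, and both extract the ideal structure by applying $R^{p^e}$-linear coordinate projections (inside $D_R^e={\rm End}_{R^{p^e}}(R)$) to a $D_R^e$-stable ideal. The paper's one streamlining is to define $J:=\{a\in R\mid a^{p^e}\in I\}$ globally at the outset and check $I=J^{[p^e]}$ using only the projections; this avoids your basis-permutation step entirely and also makes the localization reduction cleaner, since your parenthetical remark tacitly needs a globally defined candidate $J$ (for instance $J=N^{[1/p^e]}$, as you suggest) for the ``both conditions are local'' argument to be airtight, the ideal produced by your free-case argument a priori depending on the chosen prime.
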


\begin{proof}
By definition, if $P\in D_R^{e}$ and $a,b \in R$, then
$P(a^{p^e}b)=a^{p^e}P(b)$. This implies that every ideal of the form
$J^{[p^e]}$ is a $D_R^{e}$-submodule of $R$.

Conversely, suppose that $I$ is such a submodule, and let $J=\{a\in
R\mid a^{p^e}\in I\}$. We clearly have $J^{[p^e]}\subseteq I$, and
we show that equality holds. If $\frq$ is a prime ideal in $R$, then
$J_{\frq}=\{b\in R_{\frq}\mid b^{p^e}\in I_{\frq}\}$ and
$(J^{[p^e]})_{\frq}=(J_{\frq})^{[p^e]}$. Since $I_{\frq}$ is a
$D_{R_{\frq}}^{e}$-submodule of $R_{\frq}$, it follows that it is
enough to prove that $I=J^{[p^e]}$ when $R$ is local. Hence we may
assume that $R$ is free (and finitely generated) over $R^{p^e}$.

If $u_1,\ldots,u_N$ give a basis of $R$ over $R^{p^e}$, then we get
morphisms $P_i\colon R\to R$ that are $R^{p^e}$-linear by mapping
$u=\sum_{i=1}^Na_i^{p^e}u_i$ to $a_i^{p^e}$. It follows that if
$u\in I$, then $P_i(u)=a_i^{p^e}\in I$ for every $i$, hence $a_i\in
J$ and we have $u\in J^{[p^e]}$.
\end{proof}

\medskip

We denote by $R^e$ the $R$-$R$-bimodule on $R$,  with the left
structure being the usual one, and the right one being induced by
the $e^{\rm th}$ composition of the Frobenius morphism $F^e\colon
R\to R$. We use the scheme-theoretic notation for extension of
scalars via $F^e$: if $M$ is an $R$-module, then we denote by
$F^{e*}M$ the $R$-module $R^e\otimes_RM$. We have a canonical
isomorphism $R^e\otimes_RR^{e'} \simeq R^{e+e'}$ that takes
$a\otimes b$ to $ab^{p^e}$.

The ring $R[F]$ is the noncommutative ring extension of $R$
generated by a variable $F$ such that $Fa=a^pF$ for every $a\in R$.
For every $e\geq 1$ we consider also the subring $R[F^e]\subseteq
R[F]$. An $R[F^e]$-module is hence nothing but an $R$-module $M$
together with an ``action of the $e^{\rm th}$ composition of the
Frobenius on $M$'', that is a group homomorphism $F^e=F^e_M\colon
M\to M$ such that $F^e(au)=a^{p^e}u$ (or more concisely: $F^e_M$ is
an $R$-linear map $M \to F^e_*M$). Due to the adjointness of
$F^{e*}$ and $F^e_*$ this can be rephrased as follows: $M$ is an
$R$-module together with a morphism of left $R$-modules
$\theta_M^e\colon R^{e}\otimes_RM\to M$. The adjointness is
expressed through the equation $\theta_M^{e}(a\otimes u)= aF^e(u)$.

A \emph{unit $R[F^e]$-module} is an $R[F^e]$-module $M$ such that
$\theta^{e}_M$ is an isomorphism. Note that for every $s\geq 1$, the
inclusion $R[F^{se}]\subseteq R[F^e]$ makes any (unit)
$R[F^{e}]$-module into a (unit) $R[F^{se}]$-module. Moreover,
$\theta_{M}^{se}$ can be described recursively as
\[
R^{se}\otimes_RM\simeq
R^{e}\otimes_R(R^{(s-1)e}\otimes_RM)\to[{1\otimes
\theta_M^{(s-1)e}}] R^{e}\otimes_RM\to[{\theta_M^{e}}] M.
\]

Every unit $R[F^e]$-module $M$ has a canonical structure of
$D_R$-module. This is described as follows: since
$D_R=\bigcup_{s\geq 1}D_R^{se}$, it is enough to describe the action
of $P\in D_R^{se}$ on $M$. Using the isomorphism
$\theta_M^{se}\colon R^{se}\otimes_RM\to M$, we let $P$ act by
$P(a\otimes u)=P(a)\otimes u$. A fundamental result of Lyubeznik
\cite{Lyu} says that if $R$ is an algebra of finite type over a
regular local $F$-finite ring, then every finitely generated unit
$R[F^e]$-module has finite length in the category of $D_R$-modules.

It is a general fact that for every $R$-module $P$ and every $e\geq
1$, the pull-back $F^{e*}(M)$  has a natural structure of
$D_R$-module. Moreover, if $P$ is a unit $R[F^e]$-module, then
$\theta_P^e\colon F^{e*}(P)\to P$ is an isomorphism of
$D_R$-modules. For a discussion of this and related facts we refer
to \cite{AMBL}, \S 2.

For simplicity, from now on we assume that $R$ is a domain.  A basic
example of an $R[F]$-module is given by $R_f$, where $f\in R$ is
nonzero. The action of $F$ on $R_f$ is given by the Frobenius
morphism of $R_f$. It is easy to see that $R_f$ is a unit
$R[F]$-module. In fact, we will check this for the following
generalization.

Suppose that $\alpha$ is a positive rational number such that $p$
does not divide the denominator of $\alpha$. Therefore we can find
positive integers $e$ and $r$ such that $\alpha=\frac{r}{p^e-1}$.

We define the $R[F^e]$-module $M_{\alpha}$ as being the $R_f$-free
module with generator $e_{\alpha}$. We think of $e_{\alpha}$
formally as $\frac{1}{f^{\alpha}}$. Since $p^e\alpha=r+\alpha$, this
suggests the following action of $F^e$ on $M_{\alpha}$:
$$F^e\left(\frac{b}{f^m}\cdot
e_{\alpha}\right)=\frac{b^{p^e}}{f^{mp^e+r}}\cdot e_{\alpha}.$$ It
is clear that this makes $M_{\alpha}$ an $R[F^e]$-module.

\begin{lemma}\label{lem2}
For every $\alpha$ as above, $M_{\alpha}$ is a unit $R[F^e]$-module.
\end{lemma}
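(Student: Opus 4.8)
The plan is to show that the structure map $\theta^e_{M_\alpha}\colon R^e \otimes_R M_\alpha \to M_\alpha$ is an isomorphism by exhibiting its inverse explicitly, or equivalently by showing it is both surjective and injective. Since $M_\alpha$ is a free $R_f$-module of rank one on the generator $e_\alpha$, and since $R^e \otimes_R M_\alpha \cong R^e \otimes_R R_f \otimes_R M_\alpha$ — wait, more directly, $R^e \otimes_R M_\alpha$ is the free $R^e \otimes_R R_f$-module on $1 \otimes e_\alpha$, and $R^e\otimes_R R_f \cong (R_f)^e$ since localization commutes with the bimodule tensor (the right $R$-action on $R^e$ factors through $F^e$, and $f$ is invertible). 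So the source is a free module of rank one over the ring $R^e_f := R^e \otimes_R R_f$ on the generator $1\otimes e_\alpha$. The map $\theta^e$ sends $a \otimes \frac{b}{f^m} e_\alpha \mapsto a F^e\!\left(\frac{b}{f^m}e_\alpha\right) = \frac{a\,b^{p^e}}{f^{mp^e+r}}e_\alpha$.

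First I would reduce to understanding $\theta^e$ as a map of $R$-modules (in fact it suffices to understand it after inverting $f$, or even just to check it is bijective). The key observation is that $R^e \otimes_R R_f$ is, as a left $R$-module, just $R_f$ viewed via the Frobenius: an element $a \otimes \frac{c}{f^n}$ can be rewritten, using $a \otimes \frac{c}{f^n} = a \otimes \frac{c f^{n(p^e-1)}}{f^{np^e}} = a \cdot \frac{c f^{n(p^e-1)}}{f^{n}}^{?}$ — concretely, since $\frac{1}{f^{np^e}} = F^e(\frac{1}{f^n})$ corresponds to moving $\frac{1}{f^n}$ across the tensor, every element of $R^e \otimes_R R_f$ can be written uniquely in the form $\frac{a}{f^{np^e}} \otimes 1$ with $a \in R$ (after clearing denominators on the right), so that $R^e\otimes_R R_f$ is free over $R_f$ (acting on the left) via a basis that I should make explicit using a $p^e$-basis of... no, the cleanest statement: the natural map $R_f \otimes_{R} R^e \to (R_f)^e$ is an isomorphism. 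Then $\theta^e$ becomes identified with the map $R_f^e \otimes_{R_f} M_\alpha \to M_\alpha$ which is exactly $\theta^e_{M_\alpha}$ computed over $R_f$, and there the action of $F^e$ on $M_\alpha$ is, up to the invertible twist by $f^{-r}$, just the Frobenius on the free rank-one $R_f$-module. So it reduces to: the Frobenius on $R_f$ makes $R_f$ a unit $R_f[F^e]$-module, which is classical and already asserted for $R_f$ in the paragraph preceding Lemma~\ref{lem2}; the extra factor $f^{-r}$ is a unit and does not affect the isomorphism property.

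The key steps, in order: (1) identify $R^e \otimes_R M_\alpha$ with $(R^e \otimes_R R_f)\cdot(1\otimes e_\alpha)$ and identify $R^e \otimes_R R_f$ with $R_f$ with its Frobenius twist; (2) observe that under this identification $\theta^e_{M_\alpha}$ is the composite of the structure isomorphism $\theta^e_{R_f}$ for $R_f$ as a unit $R_f[F^e]$-module with multiplication by the unit $f^{-r}$ on $M_\alpha$; (3) conclude $\theta^e_{M_\alpha}$ is an isomorphism. Alternatively, and perhaps more transparently for the write-up, one can skip the general nonsense and just write down the inverse of $\theta^e$ directly: define $\psi\colon M_\alpha \to R^e\otimes_R M_\alpha$ by $\psi\!\left(\frac{b}{f^m}e_\alpha\right) = \frac{1}{f^{m+\lceil r/p^e\rceil}}\otimes$ ... — actually the honest inverse is $\frac{b}{f^m}e_\alpha \mapsto \frac{1}{f^{n}}\otimes \frac{b}{f^{np^e - m p^e - r}}^{?}$, which requires choosing $n$ large enough that $np^e \geq mp^e + r$, i.e. sending $\frac{c}{f^{mp^e+r}}e_\alpha \mapsto \frac{1}{f^m}\otimes \frac{c}{1}\,e_\alpha$ after writing the exponent in the form $mp^e+r$ — and then checking this is well-defined (independent of the chosen representation) and two-sided inverse to $\theta^e$.

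The main obstacle I expect is purely bookkeeping: making the identification $R^e \otimes_R R_f \cong R_f$ (with its Frobenius structure) clean, and tracking the exponent arithmetic $p^e\alpha = r + \alpha$ carefully so that the formula for $F^e$ and its inverse match up — in particular verifying that the candidate inverse is well-defined, i.e. respects the relations defining the tensor product over $R$ and the localization. There is no conceptual difficulty: $f$ being invertible in $M_\alpha$ is exactly what forces $\theta^e$ to be invertible, since the only obstruction to inverting the Frobenius action is dividing by powers of $f$, which is allowed here. I would therefore present the short direct argument — write down $\psi$, check $\theta^e\circ\psi = \mathrm{id}$ and $\psi\circ\theta^e=\mathrm{id}$ by a one-line computation each — rather than invoking Frobenius descent, keeping the proof self-contained.
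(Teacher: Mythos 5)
Your overall plan is right, and the paper itself takes the ``direct'' route you say you would write up: it exhibits an explicit inverse to $\theta^e_{M_\alpha}$. Your ``general nonsense'' variant (identify $R^e\otimes_R M_\alpha$ with $R_f$ carrying the structure map of the unit module $R_f$, twisted by the unit $f^{-r}$) is also essentially correct, and the observation that invertibility of $f$ is the only obstruction to inverting the Frobenius action is exactly the right intuition.

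However, the explicit inverse you commit to writing down is structurally wrong, not merely unfinished bookkeeping. You propose something of the shape $\frac{c}{f^{mp^e+r}}e_\alpha \mapsto \frac{1}{f^m}\otimes c\,e_\alpha$, putting the unit $f$-power on the left of the tensor and the coefficient $c$ on the right. Applying $\theta^e$ to that element gives
\[
\theta^e\!\left(\tfrac{1}{f^m}\otimes c\,e_\alpha\right)
=\tfrac{1}{f^m}\cdot F^e(c\,e_\alpha)
=\tfrac{c^{p^e}}{f^{m+r}}\,e_\alpha,
\]
which is $\frac{c^{p^e}}{f^{m+r}}e_\alpha$, not $\frac{c}{f^{mp^e+r}}e_\alpha$. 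The problem is that whatever sits in the right-hand tensor factor gets hit by $F^e$, so it is raised to the $p^e$-th power; the coefficient $c$ must therefore go on the \emph{left}, where it is untouched, and only a pure power of $f$ (whose $p^e$-th power we can control) should go on the right. The paper's formula does exactly this: $\psi\!\left(\frac{c}{f^s}e_\alpha\right)=c\,f^{s(p^e-1)+r}\otimes\frac{1}{f^s}e_\alpha$. Then $\theta^e\circ\psi$ gives $\frac{c\,f^{s(p^e-1)+r}}{f^{sp^e+r}}e_\alpha=\frac{c}{f^s}e_\alpha$, and well-definedness plus $\psi\circ\theta^e=\mathrm{id}$ follow from the relation $a\otimes bm=ab^{p^e}\otimes m$ in $R^e\otimes_RM_\alpha$, using that $s(p^e-1)+r=(s-m)p^e$ precisely when $s=mp^e+r$. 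So the ``one-line computation'' you plan does work, but only after you transpose the roles of the two tensor factors in your candidate $\psi$.
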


\begin{proof}
It follows from definition that the morphism $\theta_{M_{\alpha}}^e
\colon R^e\otimes_RM_{\alpha}\to M_{\alpha}$ is given by
$$\theta^e_{M_{\alpha}}\left(a\otimes\frac{b}{f^m}e_{\alpha}\right)=
\frac{ab^{p^e}}{f^{mp^e+r}}e_{\alpha}.$$ It is straightforward to
check that the map $\frac{c}{f^s}e_{\alpha}\to
cf^{s(p^e-1)+r}\otimes\frac{1}{f^s}e_{\alpha}$ is well-defined and
that it is an inverse of $\theta_{M_{\alpha}}^e$.
\end{proof}

\begin{remark}\label{same_structure}
If $e'=es$ for some positive integer $s$, then we may write
\begin{equation}\label{two_ways}
\alpha=\frac{r}{p^e-1}=\frac{r'}{p^{e'}-1},
\end{equation}
with $r'=r\cdot\frac{p^{e'}-1}{p^e-1}$. Since
$$(F^e)^s(e_{\alpha})=\frac{1}{f^{r(1+p^e+\cdots+p^{(s-1)e})}}e_{\alpha}=\frac{1}{f^{r'}}e_{\alpha},$$
we see that the action of $F^{e'}$ on $e_{\alpha}$ is the same for
both ways of writing $\alpha$ in (\ref{two_ways}). In particular,
the $D_R$-module structure on $M_{\alpha}$ depends only on $\alpha$.
\end{remark}

The following lemma relates the module $M_{\alpha}$ to some test
ideals of $f$. If $\alpha=\frac{r}{p^e-1}$ as above and $m\in\NN$,
we put $\alpha_m:=\frac{p^{me}-1}{p^{me}}\cdot\alpha$. Hence the
$\alpha_m$ form a strictly increasing sequence converging to
$\alpha$.

\begin{lemma}\label{lem3}
With the above notation, the following are equivalent:
\begin{enumerate}
\item $\tau(f^{\alpha_m})=\tau(f^{\alpha_{m+1}})$.
\item There is a differential operator $P\in D_R^{(m+1)e}$
such that $P\cdot e_{\alpha}=F^e(e_{\alpha})$.
\end{enumerate}
\end{lemma}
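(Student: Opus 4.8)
The plan is to translate both conditions into statements about $D_R^{(m+1)e}$-submodules of $R$ via Lemma~\ref{lem1}, and then match them up using the explicit description of the $D_R$-module structure on $M_\alpha$. First I would compute the test ideals appearing in (i). By Lemma~\ref{lem0}, since $\alpha_m = \frac{p^{me}-1}{p^{me}}\cdot\frac{r}{p^e-1} = \frac{r(1+p^e+\cdots+p^{(m-1)e})}{p^{me}}$, we have $\tau(f^{\alpha_m}) = \left(f^{r(1+p^e+\cdots+p^{(m-1)e})}\right)^{[1/p^{me}]}$, and similarly for $\alpha_{m+1}$ with exponent $m+1$ in place of $m$. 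Applying Lemma~\ref{lem1}, condition (i) is equivalent to the equality of the corresponding $D_R$-submodules of $R$ generated by these elements — or rather, after raising to the appropriate Frobenius power, to the statement that $f^{r(1+\cdots+p^{me})}$ lies in the $D_R^{(m+1)e}$-submodule of $R$ generated by $f^{r(1+\cdots+p^{(m-1)e})\cdot p^e} = f^{r(p^e+\cdots+p^{me})}$ (the reverse containment being automatic since $\alpha_m < \alpha_{m+1}$ forces $\tau(f^{\alpha_{m+1}}) \subseteq \tau(f^{\alpha_m})$). I'd need to be careful with the bookkeeping: the cleanest formulation is that (i) holds iff $f^{r(1+p^e+\cdots+p^{me})}$ is in the $D_R^{(m+1)e}$-submodule of $R$ generated by $f^{r(p^e+p^{2e}+\cdots+p^{me})}$, i.e. iff there is $P \in D_R^{(m+1)e}$ with $P\cdot f^{r(p^e+\cdots+p^{me})} = f^{r(1+p^e+\cdots+p^{me})}$ (using that a submodule generated by one element contains another iff some operator carries the first to the second, once we pass inside $R$ — here I'd invoke that $D_R^{(m+1)e}$-submodules are $(p^{(m+1)e})$-th power ideals and a generator of the ambient cyclic submodule maps onto any element of it).

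Next I would unwind condition (ii) using Remark~\ref{same_structure} and the formula for the $F^{e}$-action. In $M_\alpha$, working with $e_\alpha$ formally as $1/f^\alpha$, we have $F^e(e_\alpha) = \frac{1}{f^r}e_\alpha$, and more generally $(F^e)^{m+1}(e_\alpha) = \frac{1}{f^{r(1+p^e+\cdots+p^{me})}}e_\alpha$. The $D_R$-module structure on a unit $R[F^e]$-module is defined (see the paragraph before Lemma~\ref{lem2}) via the isomorphism $\theta_M^{(m+1)e}\colon R^{(m+1)e}\otimes_R M \to M$ by letting $P \in D_R^{(m+1)e}$ act through $P(a\otimes u) = P(a)\otimes u$. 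So I would express both $e_\alpha$ and $F^e(e_\alpha)$ in the form $\theta_M^{(m+1)e}(a \otimes u)$ for a common $u$, using the explicit inverse of $\theta$ from Lemma~\ref{lem2} (iterated $m+1$ times as in Remark~\ref{same_structure}). Concretely, writing $u = e_\alpha$ viewed through the isomorphism $R_f \otimes e_\alpha$-presentation, one finds $e_\alpha = \theta_M^{(m+1)e}\!\left(f^{r(1+p^e+\cdots+p^{me})}\otimes e_\alpha\right)$ up to the relevant normalization and $F^e(e_\alpha) = \theta_M^{(m+1)e}\!\left(f^{r(p^e+\cdots+p^{me})}\otimes e_\alpha\right)$ — the exponents being exactly those that showed up in the reformulation of (i). Then (ii), namely $P\cdot e_\alpha = F^e(e_\alpha)$, becomes $P\bigl(f^{r(1+p^e+\cdots+p^{me})}\bigr) = f^{r(p^e+\cdots+p^{me})}$ after passing through $\theta$ — but I should double check the direction: we want $P$ to carry the generator-level element corresponding to $e_\alpha$ to that corresponding to $F^e(e_\alpha)$, and since $F^e(e_\alpha)$ has a \emph{larger} power of $f$ in the denominator, its $R$-side representative has the \emph{smaller} exponent, so in fact (ii) says $P\bigl(f^{r(1+\cdots+p^{me})}\bigr) = f^{r(p^e+\cdots+p^{me})}$, matching the containment $f^{r(1+\cdots+p^{me})}$-generated submodule $\supseteq f^{r(p^e+\cdots+p^{me})}$-generated submodule, i.e. $\tau(f^{\alpha_{m+1}}) \supseteq \tau(f^{\alpha_m})$... at which point I realize I need to recheck which of $\alpha_m,\alpha_{m+1}$ sits in which test ideal, since $\alpha_m<\alpha_{m+1}$ gives $\tau(f^{\alpha_{m+1}})\subseteq\tau(f^{\alpha_m})$, so the nontrivial inclusion is the reverse one and I should make sure the operator is going the right way. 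This sign/direction bookkeeping is the one genuine place to be careful.

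The main obstacle, then, is not any deep idea but rather keeping the exponent arithmetic and the direction of inclusions straight while threading everything through the iterated isomorphism $\theta_M^{(m+1)e}$ and Lemma~\ref{lem1}; the conceptual content is simply that $M_\alpha$ is, via Frobenius descent, governed at each finite level by a cyclic $D_R^{(m+1)e}$-module whose generators on the $R$-side are precisely the powers $f^{r(1+p^e+\cdots+p^{ke})}$ whose $[1/p^{ke}]$-roots compute $\tau(f^{\alpha_k})$. Once the identification is set up so that $e_\alpha$ and $F^e(e_\alpha)$ correspond to the two relevant cyclic generators, the equivalence of (i) and (ii) is immediate: each says there is a $P \in D_R^{(m+1)e}$ realizing the containment between the two principal $D_R^{(m+1)e}$-submodules. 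I would write out the computation of $\theta_M^{(m+1)e}$ and its inverse on $e_\alpha$ explicitly (it's short, given Lemma~\ref{lem2}), and then state the chain of equivalences "$\tau(f^{\alpha_m}) = \tau(f^{\alpha_{m+1}})$ $\iff$ [equality of two $D_R^{(m+1)e}$-submodules of $R$] $\iff$ [existence of $P$ carrying one $R$-generator to the other] $\iff$ $P\cdot e_\alpha = F^e(e_\alpha)$" with a one-line justification for each arrow.
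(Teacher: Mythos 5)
Your approach is precisely the paper's: compute both test ideals via Lemma~\ref{lem0} with a common root $[1/p^{(m+1)e}]$, reinterpret them as cyclic $D_R^{(m+1)e}$-submodules of $R$ via Lemma~\ref{lem1}, and then use the iterated inverse of $\theta$ to identify $e_\alpha$ and $F^e(e_\alpha)$ with the two relevant powers of $f$ so that ``$P\cdot e_\alpha = F^e(e_\alpha)$'' becomes exactly the operator equation expressing equality of those submodules.

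The one thing you flag but do not resolve is the direction of the nontrivial containment, and your first pass through it is in fact backwards. You wrote that (i) holds iff $f^{r(1+p^e+\cdots+p^{me})}$ lies in the $D_R^{(m+1)e}$-submodule generated by $f^{r(p^e+\cdots+p^{me})}$; but $f^{r(1+p^e+\cdots+p^{me})} = f^r\cdot f^{r(p^e+\cdots+p^{me})}$ and $R\subseteq D_R^{(m+1)e}$, so that containment is automatic and says nothing. The correct bookkeeping: since $\alpha_m<\alpha_{m+1}$, the \emph{larger} exponent $r(1+p^e+\cdots+p^{me})$ belongs to $\tau(f^{\alpha_{m+1}})$, and the automatic inclusion of cyclic submodules reproduces $\tau(f^{\alpha_{m+1}})\subseteq\tau(f^{\alpha_m})$. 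Equality is thus equivalent to the \emph{reverse} containment, i.e.\ to $f^{r(p^e+\cdots+p^{me})}\in D_R^{(m+1)e}\cdot f^{r(1+p^e+\cdots+p^{me})}$, i.e.\ to the existence of $P\in D_R^{(m+1)e}$ with $P\bigl(f^{r(1+p^e+\cdots+p^{me})}\bigr)=f^{r(p^e+\cdots+p^{me})}$. This is exactly what your later matching with (ii) produces, since $(\theta_{M_\alpha}^{(m+1)e})^{-1}(e_\alpha)=f^{r(1+p^e+\cdots+p^{me})}\otimes e_\alpha$ while $(\theta_{M_\alpha}^{(m+1)e})^{-1}\bigl(\tfrac{1}{f^r}e_\alpha\bigr)=f^{r(p^e+\cdots+p^{me})}\otimes e_\alpha$: the operator decreases the exponent, consistent with $F^e(e_\alpha)$ having the extra $f^r$ in the denominator. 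So the reconciliation is as you suspected; just delete the first (trivially true) formulation and keep the second.
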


\begin{proof}
It follows from Lemma~\ref{lem0} that we have
$$\tau(f^{\alpha_m})=
\left(f^{r\frac{p^e(p^{me}-1)}{p^e-1}}\right)^{[1/p^{(m+1)e}]},\,{\rm
and}\, \tau(f^{\alpha_{m+1}})=
\left(f^{r\frac{p^{(m+1)e}-1}{p^e-1}}\right)^{[1/p^{(m+1)e}]}.$$
Therefore Lemma~\ref{lem1} implies
$$\tau(f^{\alpha_m})^{[p^{(m+1)e}]}=D_R^{(m+1)e}\cdot
f^{r\frac{p^e(p^{me}-1)}{p^e-1}}\,{\rm and}\,
\tau(f^{\alpha_{m+1}})^{[p^{(m+1)e}]}= D_R^{(m+1)e}\cdot
f^{r\frac{p^{(m+1)e}-1}{p^e-1}}.$$

We always have $\tau(f^{\alpha_{m+1}}) \subseteq\tau(f^{\alpha_m})$
for every $m$. It follows from the above formulas that these ideals
are equal if and only if
 there is $P\in D_R^{(m+1)e}$ such that
\begin{equation}\label{eq_lem3}
f^{r\frac{p^e(p^{me}-1)}{p^e-1}}=P\cdot
f^{r\frac{p^{(m+1)e}-1}{p^e-1}}.
\end{equation}
We claim that this is the case if and only if  $P\cdot e_{\alpha}=
\frac{1}{f^r}e_{\alpha}$ in $M_{\alpha}$. Note first that since
$P\in D_R^{(m+1)e}$, it follows from the description of the action
of $D_R$ on $M_{\alpha}$ that
$$P\cdot
e_{\alpha}=\theta_{M_{\alpha}}^{(m+1)e}(P\otimes
1)(\theta_{M_{\alpha}}^{(m+1)e})^{-1}(e_{\alpha}).
$$
The formula for $(\theta^e_{M_{\alpha}})^{-1}$ in the proof of
Lemma~\ref{lem2} implies that
$$(\theta^{(m+1)e}_{M_{\alpha}})^{-1}(e_{\alpha})=f^{r\frac{p^{(m+1)e}-1}{p^e-1}}\otimes e_{\alpha}$$
and therefore
\[
(\theta_{M_{\alpha}}^{(m+1)e})^{-1}(P\cdot
e_{\alpha})=P\left(f^{r\frac{p^{(m+1)e}-1}{p^e-1}}\right)\otimes
e_{\alpha}=f^{rp^{(m+1)e}}\cdot
P\left(f^{r\frac{p^{(m+1)e}-1}{p^e-1}}\right)\otimes
\frac{1}{f^r}e_{\alpha}.
\]

 On the other hand,
$$(\theta^{(m+1)e}_{M_{\alpha}})^{-1}\left(\frac{1}{f^r}
e_{\alpha}\right)= f^{r\frac{p^e(p^{(m+1)e}-1)}{p^e-1}}\otimes
\frac{1}{f^r}e_{\alpha},$$ hence $P\cdot
e_{\alpha}=\frac{1}{f^r}e_{\alpha}$ if and only if
$$
f^{rp^{(m+1)e}} P\left(f^{r\frac{p^{(m+1)e}-1}{p^e-1}}\right)=
f^{r\frac{p^e(p^{(m+1)e}-1)}{p^e-1}},
$$
which is equivalent to (\ref{eq_lem3}). This completes the proof of
the lemma.
\end{proof}

\begin{remark}\label{intervals}
Since we have $D_R^{(m+1)e}\subseteq D_R^{(m+2)e}$, it follows from
the lemma that if $\tau(f^{\alpha_m})=\tau(f^{\alpha_{m+1}})$, then
$\tau(f^{\alpha_i})=\tau(f^{\alpha_m})$ for every $i\geq m$. In
other words, there is no $F$-jumping exponent in
$(\alpha_m,\alpha)$. See also Proposition~\ref{alternative} below
for an alternative proof of this statement.
\end{remark}

\begin{remark}
Using the language of roots and generators of finitely generated
unit $R[F^e]$-modules as in \cite{Lyu} one can easily show that
$M_\alpha$ is naturally isomorphic to the unit $R[F^e]$-module
generated by
\[
    \beta: R \to[a \mapsto f^r\cdot a \otimes 1] R^e \otimes R \cong F^{e*}R\; .
\]
The unit module generated by $\beta$ is by definition the inductive
limit $\widetilde{M}_{\alpha}$ of the direct system one obtains by
composition of Frobenius powers of the map $\beta$. As an
$R$-module, $\widetilde{M}_{\alpha}$ is isomorphic to $R_f$ but the
action of the Frobenius is not the usual one (except in the case
$r=p^e-1$). One easily checks (by sending the image of $1 \in R$ in
the limit $\widetilde{M}_{\alpha}$ to $e_\alpha \in M_\alpha$) that
$\widetilde{M}_{\alpha}$ and $M_{\alpha}$ are isomorphic as
$R[F^e]$-modules. By construction it follows that $Re_\alpha
\subseteq M_\alpha$ is a root of $M_\alpha$. Therefore
\cite[Corollary 4.4]{AMBL} implies that $e_\alpha$ generates
$M_\alpha$ as a $D_R$-module. In Theorem \ref{thm4} below we will
give a direct proof of this fact.
\end{remark}

\begin{corollary}\label{cor1}
If $\alpha=\frac{r}{p^e-1}$, then $e_{\alpha}$ generates
$M_{\alpha}$ as a $D_R$-module if and only if
 $\alpha$ is not an accumulation point of
$F$-jumping exponents of $f$.
\end{corollary}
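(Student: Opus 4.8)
The plan is to recognize that both conditions in the statement are equivalent to the single intermediate assertion that $F^e(e_{\alpha})=\tfrac{1}{f^{r}}e_{\alpha}$ lies in $D_R\cdot e_{\alpha}$, and to pass back and forth using the strictly increasing sequence $\alpha_m=\tfrac{p^{me}-1}{p^{me}}\,\alpha$ (which converges to $\alpha$) together with Lemma~\ref{lem3} and Remark~\ref{intervals}.

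First I would handle the geometric side. Since the $F$-jumping exponents cannot accumulate at $\alpha$ from the right — by the result of \cite{BMS} recalled above, $\tau(f^{\alpha})=\tau(f^{\lambda})$ for $\lambda$ in some interval $[\alpha,\alpha+\epsilon)$ — the number $\alpha$ fails to be an accumulation point of $F$-jumping exponents exactly when there is a $\delta>0$ with no $F$-jumping exponent in $(\alpha-\delta,\alpha)$, and since $\alpha_m\to\alpha$ from below this is the same as: there is an $m$ with no $F$-jumping exponent in $(\alpha_m,\alpha)$. Because $\alpha_m<\alpha_{m+1}<\alpha$, it follows from \cite{BMS} together with Remark~\ref{intervals} that the latter condition is equivalent to $\tau(f^{\alpha_m})=\tau(f^{\alpha_{m+1}})$ for some $m$. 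By Lemma~\ref{lem3} this in turn says that for some $m$ there is $P\in D_R^{(m+1)e}$ with $P\cdot e_{\alpha}=F^e(e_{\alpha})$; and as $D_R=\bigcup_{s\geq 1}D_R^{se}$, this is precisely the statement $F^e(e_{\alpha})\in D_R\cdot e_{\alpha}$.

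Next I would show that $F^e(e_{\alpha})\in D_R\cdot e_{\alpha}$ if and only if $e_{\alpha}$ generates $M_{\alpha}$ as a $D_R$-module; the implication $\Leftarrow$ is trivial. For $\Rightarrow$, put $N=D_R\cdot e_{\alpha}$ and assume $F^e(e_{\alpha})\in N$. Setting $t_s=r(1+p^e+\cdots+p^{(s-1)e})=r\,\tfrac{p^{se}-1}{p^{e}-1}$, the computations in the proof of Lemma~\ref{lem3} (the formula $(\theta^{se}_{M_{\alpha}})^{-1}(e_{\alpha})=f^{t_s}\otimes e_{\alpha}$ and the identification $D_R^{se}\cdot f^{t_s}=\tau(f^{\alpha_s})^{[p^{se}]}$) together with $F^{se}(e_{\alpha})=f^{-t_s}e_{\alpha}$ from Remark~\ref{same_structure} and the description of the $D_R$-action on the unit module $M_{\alpha}$ give
\[
D_R^{se}\cdot e_{\alpha}\;=\;\frac{1}{f^{t_s}}\,\tau(f^{\alpha_s})^{[p^{se}]}\cdot e_{\alpha}\qquad(s\geq 1).
\]
By the equivalences of the previous paragraph, $F^e(e_{\alpha})\in N$ produces an $m$ with $\tau(f^{\alpha_m})=\tau(f^{\alpha_{m+1}})$, hence by Remark~\ref{intervals} a single ideal $J$ with $\tau(f^{\alpha_s})=J$ for all $s\geq m$. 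Now for $s\geq m$ an arbitrary element of $D_R^{se}\cdot e_{\alpha}$ is $\tfrac{g}{f^{t_s}}e_{\alpha}$ with $g\in J^{[p^{se}]}$, and since $t_{s+1}=r+p^{e}t_s$ the defining formula for the action of $F^e$ on $M_{\alpha}$ gives $F^e\!\left(\tfrac{g}{f^{t_s}}e_{\alpha}\right)=\tfrac{g^{p^e}}{f^{t_{s+1}}}e_{\alpha}$; as $g^{p^e}\in J^{[p^{(s+1)e}]}$, this lies in $D_R^{(s+1)e}\cdot e_{\alpha}\subseteq N$. Hence $F^e(N)\subseteq N$, so $N$ is an $R[F^e]$-submodule of $M_{\alpha}$ containing $e_{\alpha}$; since $(F^e)^s(e_{\alpha})=f^{-t_s}e_{\alpha}$ with $t_s\to\infty$, the $R[F^e]$-submodule generated by $e_{\alpha}$ is already all of $R_f\,e_{\alpha}=M_{\alpha}$, and therefore $N=M_{\alpha}$, i.e. $e_{\alpha}$ generates $M_{\alpha}$ as a $D_R$-module.

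I expect the main obstacle to be exactly this last implication $F^e(e_{\alpha})\in D_R e_{\alpha}\Rightarrow D_R e_{\alpha}=M_{\alpha}$: one may not simply write $F^e(D_R e_{\alpha})=D_R\,F^e(e_{\alpha})$, since $F^e$ does not commute with $D_R$ (it twists differential operators by Frobenius), so the stability of $N$ under $F^e$ has to be checked directly, and this is where the stabilization of the test ideals $\tau(f^{\alpha_s})$ for $s\gg 0$ — Remark~\ref{intervals}, which rests only on $D_R^{(m+1)e}\subseteq D_R^{(m+2)e}$ — is used in an essential way. The rest is bookkeeping with the exponents $t_s$ and direct appeals to Lemmas~\ref{lem0}, \ref{lem1}, \ref{lem3}.
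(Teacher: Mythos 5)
Your proof is correct, and the first half — reducing ``$\alpha$ is not an accumulation point'' to ``$F^e(e_\alpha)=\tfrac{1}{f^r}e_\alpha\in D_R\cdot e_\alpha$'' via Lemma~\ref{lem3} and Remark~\ref{intervals} — coincides with the paper's argument. You diverge in the step showing that this containment forces $D_R\cdot e_\alpha = M_\alpha$. The paper argues via Remark~\ref{same_structure}: replacing $(e,r)$ by $(se,\,t_s)$ leaves the $D_R$-module structure of $M_\alpha$ unchanged, so running Lemma~\ref{lem3} with each such representation gives $\tfrac{1}{f^{t_s}}e_\alpha\in D_R\cdot e_\alpha$ for every $s$, and these elements $R$-generate $M_\alpha$. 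You instead extract from the proof of Lemma~\ref{lem3} the identity $D_R^{se}\cdot e_\alpha=\tfrac{1}{f^{t_s}}\tau(f^{\alpha_s})^{[p^{se}]}\cdot e_\alpha$, and, using the stabilization $\tau(f^{\alpha_s})=J$ for $s\geq m$ together with $(J^{[p^{se}]})^{[p^e]}=J^{[p^{(s+1)e}]}$ and $t_{s+1}=r+p^e t_s$, check directly that $F^e(D_R^{se}\cdot e_\alpha)\subseteq D_R^{(s+1)e}\cdot e_\alpha$ for $s\geq m$; hence $N=D_R\cdot e_\alpha$ is an $R[F^e]$-submodule containing $e_\alpha$, and so $N=M_\alpha$ by Remark~\ref{finitely_generated}. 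Both routes arrive at $\tfrac{1}{f^{t_s}}e_\alpha\in N$ for all $s$; yours is slightly longer but more self-contained, bypassing the $\alpha$-rewriting device of Remark~\ref{same_structure} and making explicit exactly where the stabilization hypothesis is used to produce $F^e$-stability of $N$ — a real point, as you note, since $F^e$ does not normalize $D_R$. It is also a nice foil to Theorem~\ref{thm4}, where a Frobenius-stability property of $D_R\cdot e_\alpha$ (in the opposite direction, $N\subseteq F^{e*}N$) is established unconditionally at the cost of invoking Lyubeznik's finiteness theorem.
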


\begin{proof}
The $\alpha_m$ form a strictly increasing sequence converging to
$\alpha$, hence $\alpha$ is not an accumulation point of $F$-jumping
exponents if and only if the sequence of ideals
$\{\tau(f^{\alpha_m})\}_m$ stabilizes. By Remark~\ref{intervals},
this is the case if and only if
$\tau(f^{\alpha_m})=\tau(f^{\alpha_{m+1}})$ for some $m$. Since
$D_R=\cup_{m\geq 1}D_R^{me}$, it is clear from Lemma~\ref{lem3} that
if $M_{\alpha} =D_R \cdot e_{\alpha}$, then $\alpha$ is not an
accumulation point of $F$-jumping exponents. Conversely, if this is
the case, then $\frac{1}{f^r}e_{\alpha}\in D_R \cdot e_{\alpha}$. By
Remark~\ref{same_structure}, we see that in fact we have infinitely
many positive integers $r_m$ such that $\frac{1}{f^{r_m}}e_{\alpha}$
lies in $D_R\cdot e_{\alpha}$. Since these elements generate
$M_{\alpha}$ as an $R$-module, we see that $M_{\alpha}=D_R \cdot
e_{\alpha}$.
\end{proof}

\begin{corollary}\label{reduction_to_local}
If $\alpha=\frac{r}{p^e-1}$, then $\alpha$ is not an accumulation
point of $F$-jumping exponents of $f\in R$ if and only if for every
$\frq\in \Spec(R)$, $\alpha$ is not an accumulation point of
$F$-jumping exponents of $\frac{f}{1}\in R_{\frq}$.
\end{corollary}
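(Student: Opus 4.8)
The plan is to reduce the statement to Corollary \ref{cor1} applied over $R$ and over each localization $R_\frq$, and then to compare $D_R$-generation of $M_\alpha$ with $D_{R_\frq}$-generation of $(M_\alpha)_\frq$. First I would record the basic compatibilities: localization commutes with the constructions involved. Since $(D_R^s)_\frq \simeq D_{R_\frq}^s$ for every $s$, we get $(D_R)_\frq \simeq D_{R_\frq}$; and since $M_\alpha$ is by definition $R_f$-free on the generator $e_\alpha$ with the explicit $F^e$-action recalled before Lemma \ref{lem2}, localizing at $\frq$ turns $M_\alpha$ into the analogous module $M_\alpha(R_\frq)$ built from $\tfrac{f}{1}\in R_\frq$, with $e_\alpha$ mapping to the corresponding generator (here we use that $(R_\frq)_{f/1} = (R_f)_\frq$, which is where the assumption that $R$ is a domain keeps things clean). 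Under these identifications, $(D_R\cdot e_\alpha)_\frq = D_{R_\frq}\cdot e_\alpha$ inside $(M_\alpha)_\frq = M_\alpha(R_\frq)$.

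Next, by Corollary \ref{cor1}, the statement ``$\alpha$ is not an accumulation point of $F$-jumping exponents of $f\in R$'' is equivalent to $M_\alpha = D_R\cdot e_\alpha$, i.e.\ to the quotient $R$-module $N := M_\alpha/(D_R\cdot e_\alpha)$ being zero; and similarly, for each $\frq$, ``$\alpha$ is not an accumulation point of $F$-jumping exponents of $\tfrac{f}{1}\in R_\frq$'' is equivalent to $M_\alpha(R_\frq) = D_{R_\frq}\cdot e_\alpha$, i.e.\ to $N_\frq = 0$ by the previous paragraph. So the Corollary reduces to the tautology that an $R$-module $N$ is zero if and only if $N_\frq = 0$ for all $\frq\in\Spec(R)$. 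The forward implication is immediate; the reverse is the standard local-to-global principle (it suffices to test at maximal ideals, and a nonzero element of $N$ has a nonzero image in some $N_\frm$).

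The one genuine point to check — and the step I expect to be the main (if mild) obstacle — is the compatibility in the first paragraph: that localizing $M_\alpha$ really does produce the module $M_\alpha(R_\frq)$ attached to $\tfrac{f}{1}$, \emph{as an $R[F^e]$-module}, so that the induced $D_R$-actions match after localization. Concretely one must check that the formula $F^e\!\left(\tfrac{b}{f^m}e_\alpha\right) = \tfrac{b^{p^e}}{f^{mp^e+r}}e_\alpha$ is compatible with passing to $R_\frq$, and that the resulting $D_{R_\frq}$-action — defined via $\theta^{se}$ as before Lemma \ref{lem2} — agrees with the localization of the $D_R$-action, which follows because $\theta^{se}_{M_\alpha}$ localizes to $\theta^{se}_{M_\alpha(R_\frq)}$ and $(D_R^{se})_\frq = D_{R_\frq}^{se}$ acts the same way on simple tensors $a\otimes u$. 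All of this is routine once set up, so the corollary follows.
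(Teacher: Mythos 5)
Your proof is correct, and it takes a genuinely different route from the paper's. The paper proves this corollary working entirely at the level of test ideals: it uses the equivalence (from Remark~\ref{intervals} and the proof of Corollary~\ref{cor1}) that $\alpha$ is not an accumulation point iff $\tau(f^{\alpha_m})=\tau(f^{\alpha_{m+1}})$ for some $m$, observes that each condition $\tau((fR_\frq)^{\alpha_m})=\tau((fR_\frq)^{\alpha_{m+1}})$ defines an \emph{open} subset $U_m\subseteq\Spec R$ with $U_m\subseteq U_{m+1}$, and then invokes quasicompactness of $\Spec R$ to extract a uniform $m_0$ with $U_{m_0}=\Spec R$, whence $\tau(f^{\alpha_{m_0}})=\tau(f^{\alpha_{m_0+1}})$ by the local criterion for equality of ideals. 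Your argument instead pushes the statement through the $D$-module reformulation of Corollary~\ref{cor1} and reduces everything to ``$N=0$ iff $N_\frq=0$ for all $\frq$'' for $N=M_\alpha/(D_R\cdot e_\alpha)$; the quasicompactness step disappears because the union over all $m$ of the conditions ``$D_R^{(m+1)e}e_\alpha\ni f^{-r}e_\alpha$'' has already been packaged into the single module-theoretic statement $M_\alpha=D_R\cdot e_\alpha$. The trade-off is that you must verify the compatibility $(M_\alpha)_\frq\simeq M_\alpha(R_\frq)$ as $D_{R_\frq}$-modules and $(D_R\cdot e_\alpha)_\frq=D_{R_\frq}\cdot e_\alpha$, which you correctly flag; this follows from $S^{-1}D_R^{se}\simeq D_{S^{-1}R}^{se}$ together with the fact that Frobenius pullback (hence $\theta^{se}$) commutes with localization, and is indeed routine once spelled out. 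Both approaches are of comparable length; the paper's stays closer to elementary ideal theory, while yours is more structural and avoids the open-cover argument by letting the local-to-global principle for module vanishing do the uniformization.
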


\begin{proof}
We have seen that $\alpha$ is not an accumulation point of
$F$-jumping exponents of $f$ if and only if $\tau(f^{\alpha_m})
=\tau(f^{\alpha_{m+1}})$ for some $m$. Since taking test ideals
commutes with localization, it is clear that if this property holds
in $R$, then it holds in every $R_{\frq}$. For the converse, note
that if
\begin{equation}\label{case_m}
\tau((fR_{\frq})^{\alpha_m})=\tau((fR_{\frq})^{\alpha_{m+1}}),
\end{equation}
 then
the same holds for all primes $\frq'$ is a neighborhood of $\frq$.
If $U_m$ is the open subset consisting of those $\frq$ for which
(\ref{case_m}) holds, and if ${\rm Spec}(R)=\cup_mU_m$, then
$\Spec(R)=U_{m_0}$ for some $m_0$ (we use the fact that $\Spec(R)$
is quasicompact and that $U_m\subseteq U_{m+1}$ by
Remark~\ref{intervals}). This implies that
$\tau(f^{\alpha_{m_0}})=\tau(f^{\alpha_{m_0+1}})$, hence $\alpha$ is
not an accumulation point of $F$-exponents of $f$.
\end{proof}

\begin{remark}\label{finitely_generated}
It is easy to see that $M_{\alpha}$ is generated by $e_{\alpha}$ as
an $R[F^e]$-module. Indeed, $M_{\alpha}$ is generated as an
$R$-module by the $(F^e)^m(e_{\alpha})=\frac{1}{f^{r
\frac{p^{me}-1}{p^e-1}}}\cdot e_{\alpha}$, with $m\geq 1$.
\end{remark}

\begin{theorem}\label{thm4}
Let $R$ be an $F$-finite regular domain. If $f$ is a nonzero element
in $R$ and $\alpha=\frac{r}{p^e-1}$ for some positive integers $r$
and $e$, then $M_{\alpha}$ is generated over $D_R$ by $e_{\alpha}$.
\end{theorem}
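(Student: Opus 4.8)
The plan is to run the finite-length argument of \cite{AMBL}, viewing $M_\alpha$ as a finitely generated unit $R[F^e]$-module and exploiting that the Frobenius pull-back $F^{e*}$ is an exact, faithful auto-equivalence of the category of $D_R$-modules under which $M_\alpha$ is identified with itself via $\theta_{M_\alpha}^e$. First I would reduce to the case in which $R$ is local: the equality $D_R\cdot e_\alpha=M_\alpha$ of $R$-submodules of $M_\alpha$ may be checked after localizing at an arbitrary $\frq\in\Spec(R)$, and since $S^{-1}D_R^e\cong D_{S^{-1}R}^e$, localization sends $D_R\cdot e_\alpha$ to $D_{R_\frq}\cdot e_\alpha$ and sends $M_\alpha$ to the analogously defined module over the $F$-finite regular local domain $R_\frq$ built from the same $f$. (One could instead quote Corollaries~\ref{cor1} and~\ref{reduction_to_local}.)

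So assume $R$ is local. By Remark~\ref{finitely_generated}, $M_\alpha$ is generated over $R[F^e]$ by $e_\alpha$, so it is a finitely generated unit $R[F^e]$-module, and hence by Lyubeznik's theorem \cite{Lyu} it has finite length as a $D_R$-module. Set $N_0:=D_R\cdot e_\alpha$ and define recursively $N_{s+1}:=\theta_{M_\alpha}^e\bigl(F^{e*}N_s\bigr)\subseteq M_\alpha$, using that $F^{e*}$ carries the inclusion $N_s\hookrightarrow M_\alpha$ to an inclusion of $D_R$-modules and that $\theta_{M_\alpha}^e\colon F^{e*}M_\alpha\to M_\alpha$ is an isomorphism of $D_R$-modules; thus each $N_s$ is a $D_R$-submodule of $M_\alpha$, isomorphic to $(F^{e*})^sN_0$. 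Since $\theta_{M_\alpha}^e(a\otimes u)=aF^e(u)$, the submodule $N_{s+1}$ is the $R$-span of $F^e(N_s)$. This yields two facts. First, by induction $N_s$ contains $(F^e)^s(e_\alpha)$, so $\bigcup_s N_s=M_\alpha$ by Remark~\ref{finitely_generated}. Second, $e_\alpha=f^r\cdot F^e(e_\alpha)$ lies in the $R$-span of $F^e(N_0)$, hence $e_\alpha\in N_1$; since $N_1$ is a $D_R$-submodule this forces $N_0=D_R\cdot e_\alpha\subseteq N_1$, and applying the monotone operator $N\mapsto\theta_{M_\alpha}^e(F^{e*}N)$ repeatedly gives an increasing chain $N_0\subseteq N_1\subseteq N_2\subseteq\cdots$.

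To conclude: from $N_{s+1}=\theta_{M_\alpha}^e(F^{e*}N_s)\cong F^{e*}N_s$, compatibly with $M_\alpha\cong F^{e*}M_\alpha$, exactness of $F^{e*}$ gives $M_\alpha/N_{s+1}\cong F^{e*}(M_\alpha/N_s)$, and inductively $M_\alpha/N_s\cong(F^{e*})^s(M_\alpha/N_0)$. As $\{N_s\}$ is an ascending chain of $D_R$-submodules of the finite-length module $M_\alpha$, it stabilizes, and since $\bigcup_s N_s=M_\alpha$ we get $N_{s_0}=M_\alpha$ for some $s_0$, that is, $(F^{e*})^{s_0}(M_\alpha/N_0)=0$. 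Because $F^{e*}$ is faithful (it is base change along the faithfully flat ring map $F^e\colon R\to R$), this forces $M_\alpha/N_0=0$, i.e. $M_\alpha=D_R\cdot e_\alpha$.

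The one delicate point---the main obstacle---is the interaction of the $D_R$-structure with Frobenius: that $F^{e*}$ is a functor landing in $D_R$-modules (not merely $D_R^e$-modules), that it is exact and faithful there, and that $\theta_{M_\alpha}^e$ is $D_R$-linear, so that each $N_s$ is genuinely a $D_R$-submodule and the identifications $M_\alpha/N_{s+1}\cong F^{e*}(M_\alpha/N_s)$ hold in $D_R$-modules. These are the ``Frobenius descent'' facts recalled in \cite{AMBL}, \S2; granting them, everything else---monotonicity of the chain and its union being $M_\alpha$, faithfulness of $F^{e*}$, and the stabilization from finite length---is routine. Alternatively one can bypass the explicit chain: by Corollary~\ref{cor1} it is enough to show $\tau(f^{\alpha_m})=\tau(f^{\alpha_{m+1}})$ for some $m$, which by Lemma~\ref{lem3} and Remark~\ref{intervals} is again a finite-length statement about $M_\alpha$, proved from the same input.
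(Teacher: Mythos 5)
Your proposal is correct and follows essentially the same route as the paper: reduce to the local case, observe that $e_\alpha=f^r\cdot\theta^e_{M_\alpha}(1\otimes e_\alpha)$ forces the $D_R$-submodule generated by $e_\alpha$ to be contained in its own Frobenius pull-back, invoke Lyubeznik's finite-length theorem and the faithful flatness of Frobenius to stabilize the resulting chain, and finish by noting that the union of the chain is all of $M_\alpha$ via Remark~\ref{finitely_generated}. The only cosmetic difference is that you phrase the conclusion as $(F^{e*})^{s_0}(M_\alpha/N_0)=0\Rightarrow M_\alpha/N_0=0$, while the paper first upgrades the containment $N\subseteq F^{e*}(N)$ to equality (via strictness propagating under faithfully flat base change and contradicting finite length) and then identifies $N$ with the union of the $(F^{me})^*(N)$; these are interchangeable.
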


\begin{proof}
Note first that by Corollary~\ref{reduction_to_local}, we may assume
that $R$ is local. Then the argument follows verbatim the argument
for Theorem~4.1 in \cite{AMBL}. Let $N$ denote the $D_R$-submodule
of $M_{\alpha}$ generated by $e_{\alpha}$. Note that we have
$$F^{e*}(N)\subseteq
F^{e*}(M_{\alpha})\to[\theta_{M_{\alpha}}^e] M_{\alpha},$$ hence we
may consider $F^{e*}(N)$ as a submodule of $M_{\alpha}$. We claim
that $N\subseteq F^{e*}(N)$. Since $F^{e*}(N)$ is a $D_R$-submodule
of $M_{\alpha}$, it is enough to show that $e_{\alpha}\in
F^{e*}(N)$. This follows from $e_{\alpha}= f^r\cdot
\theta^e_{M_{\alpha}}(1\otimes e_{\alpha})$.

Theorem~4.3 in \cite{AMBL} shows that this makes $N$ a unit
$R[F^e]$-module, i.e. we have in fact $N=F^{e*}(N)$. The idea is the
following: if $N\neq (F^*)(N)$, then the faithfull flatness of the
Frobenius implies that we have a sequence of \emph{strict}
inclusions
$$N\subsetneq F^{e*}(N)\subsetneq (F^{2e})^*(N)\subsetneq \cdots$$
of $D_R$-submodules of $M_{\alpha}$. This contradicts Lyubeznik's
Theorem \cite{Lyu} saying that as a unit $R[F^e]$-module,
$M_{\alpha}$ has finite length in the category of $D_R$-modules (we
may apply the theorem, since we assume that $R$ is local and
$F$-finite).

Therefore we have $N=(F^{me})^*(N)$ for every $m$. On the other
hand, every element in $M_{\alpha}$ lies in some $(F^{me})^*(N)$.
This follows from
$$\frac{1}{f^{rp^{(m-1)e}}}e_{\alpha}=
\theta_{M_{\alpha}}^{me}(1\otimes e_{\alpha})\in (F^{me})^*(N).$$
Therefore $N=M_{\alpha}$.
\end{proof}

\begin{remark}\label{rem_thm4}
By putting together the above results, we see that under the
hypothesis of Theorem~\ref{thm4}, every rational number $\alpha$
whose denominator is not divisible by $p$ is not an accumulation
point of $F$-jumping exponents of a given $f$. The above proofs
extend to this setting the main result in \cite{AMBL} which deals
with the case $\alpha=1$. In addition, we have dropped the extra
assumption that was imposed in \emph{loc. cit.} in order to apply
Lyubeznik's Theorem.
\end{remark}

Before we proceed to the proof of Theorem~\ref{theorem_introduction}
we show the following lemma which allows us to do induction. Note
that this lemma itself does not require the ideal to be principal.

\begin{lemma}\label{key}
Let $R$ be an $F$-finite regular ring, and $\fra$ an ideal in $R$.
\begin{enumerate}
\item If $\lambda$ is an $F$-jumping exponent of $\fra$, then there is a
prime ideal $\frq$ in $R$ such that $\lambda$ is an $F$-jumping
exponent also of $\fra R_{\frq}$.
\item If $\lambda$ is an \emph{accumulation point} of jumping numbers of $\fra$, then
we can find a \emph{non-maximal} prime ideal $\frq$ such that
$\lambda$ is an $F$-jumping exponent of $\fra R_{\frq}$.
\end{enumerate}
\end{lemma}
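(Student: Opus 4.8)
The plan is to prove both statements by a Noetherian/quasicompactness argument on $\Spec(R)$, using the fact that forming test ideals commutes with localization. For part (i), recall that $\lambda$ being an $F$-jumping exponent of $\fra$ means $\tau(\fra^{\lambda})\subsetneq\tau(\fra^{\mu})$ for all $\mu<\lambda$; by the semicontinuity in $\lambda$ already recalled (the test ideals are right-continuous), it is enough to find $\mu<\lambda$ with $\tau(\fra^\mu)=\tau(\fra^{\mu'})$ for $\mu\le\mu'<\lambda$ but $\tau(\fra^\lambda)\subsetneq\tau(\fra^\mu)$, and then to localize. If $\tau(\fra^{\lambda})\subsetneq\tau(\fra^{\mu})$ as ideals of $R$, pick $g\in\tau(\fra^\mu)\setminus\tau(\fra^\lambda)$; then $g$ maps to a nonzero element of $\tau(\fra^\mu)_{\frq}/\tau(\fra^\lambda)_{\frq}=\tau((\fra R_{\frq})^{\mu})/\tau((\fra R_{\frq})^{\lambda})$ for some prime $\frq$ (in fact one containing a suitable associated prime of $\tau(\fra^\lambda):g$), so $\tau((\fra R_{\frq})^{\lambda})\neq\tau((\fra R_{\frq})^{\mu})$, and since $\mu<\lambda$ was arbitrary this shows $\lambda$ is an $F$-jumping exponent of $\fra R_{\frq}$.

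For part (ii), the point is to upgrade ``some prime $\frq$'' to ``some non-maximal prime $\frq$''. Suppose $\lambda$ is an accumulation point of $F$-jumping exponents of $\fra$, so there is a strictly increasing (after passing to a subsequence, using that jumping exponents are right-discrete) sequence $\lambda_1<\lambda_2<\cdots$ of $F$-jumping exponents converging to $\lambda$. For each $i$, part (i) gives a prime $\frq_i$ with $\lambda_i$ an $F$-jumping exponent of $\fra R_{\frq_i}$. I would like to argue that all but finitely many of these can be taken non-maximal; more robustly, I would argue directly as follows. Consider, for each maximal ideal $\frm$, the set of $F$-jumping exponents of $\fra R_\frm$ in the interval, say, $[\lambda/2,\lambda]$ — a finite set, by the local case, or at least by assuming the result is being proved by induction on dimension for non-maximal primes and invoking discreteness there. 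The subtle issue is that there are potentially infinitely many maximal ideals, so one cannot simply say ``only finitely many jumping exponents total.'' Instead, cover $\Spec(R)$ by the open sets on which a fixed test-ideal equality $\tau(\fra^{\mu})=\tau(\fra^{\mu'})$ holds for a suitable pair $\mu<\mu'$ straddling a potential gap; quasicompactness of $\Spec(R)$ then forces finitely many such equalities to suffice globally, contradicting the existence of infinitely many jumping exponents accumulating at $\lambda$ — unless the jumping exponents ``escape'' to a locus that is not an entire neighborhood, which is exactly where non-maximal primes come in.

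More concretely, here is the cleanest version of the argument for (ii). Since $\tau(\fra^\bullet)$ takes only finitely many values on any bounded interval is \emph{not} a priori known globally, I will instead use: for each prime $\frq$, the function $\mu\mapsto\tau((\fra R_\frq)^\mu)$ is locally constant as a function of $\frq$ where it jumps. Fix $\delta>0$ with $\lambda-\delta$ not an $F$-jumping exponent of $\fra$; let $U$ be the set of primes $\frq$ such that $\fra R_\frq$ has no $F$-jumping exponent in $(\lambda-\delta,\lambda)$. This $U$ is open (if $\fra R_\frq$ has no jumping exponent in that interval, then $\tau((\fra R_\frq)^{\lambda-\delta/2})=\tau((\fra R_\frq)^{\mu})$ for all $\mu$ in the interval up to $\lambda$, an equality of finitely generated ideals which persists in a neighborhood, using right-continuity at the endpoint $\lambda$). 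If $U$ were all of $\Spec(R)$, then by quasicompactness and the same reasoning as in the proof of Corollary~\ref{reduction_to_local}, $\fra$ itself would have no $F$-jumping exponent in $(\lambda-\delta,\lambda)$, contradicting that $\lambda$ is an accumulation point. So there is a prime $\frq\notin U$, i.e.\ $\fra R_\frq$ has an $F$-jumping exponent in $(\lambda-\delta,\lambda)$; choosing $\frq$ \emph{minimal} with this property (possible since the complement of $U$ is closed), and noting $\delta$ can be shrunk, one produces a descending chain and, in the limit / by a further compactness argument on the closed set $\Spec(R)\setminus U$, a prime $\frq$ with $\lambda$ itself an accumulation point of jumping exponents of $\fra R_\frq$, hence in particular a jumping exponent of it; and $\frq$ cannot be maximal because if it were, passing to the local ring $R_\frq$ — where by the (to-be-proved, by induction or by the completion reduction) local case the jumping exponents are discrete — would contradict $\lambda$ being an accumulation point.

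I expect the main obstacle to be the bookkeeping around openness and the interaction of the two limiting processes (the sequence $\lambda_i\to\lambda$ and the descending chain of primes): one must be careful that the ``no jumping exponent in $(\lambda-\delta,\lambda)$'' locus is genuinely open — this relies on right-continuity of $\tau(\fra^{\bullet})$ at $\lambda$ together with finite generation, so that a single ideal equality over the interval spreads out — and that the closed complement, being a Noetherian topological space, has a generic point of some component that serves as the desired non-maximal $\frq$. The principality hypothesis is not needed here, as the statement asserts; everything rests only on the localization behavior of test ideals and quasicompactness of the spectrum, exactly as in Corollary~\ref{reduction_to_local}.
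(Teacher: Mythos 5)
Your approach diverges from the paper's in a way that leaves two genuine gaps.

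The paper's proof hinges on a single device that you do not use: for a strictly increasing sequence $\lambda_m\to\lambda$, it forms the colon ideals $I_m=\bigl(\tau(\fra^{\lambda}):\tau(\fra^{\lambda_m})\bigr)$. These are \emph{increasing}, so by Noetherianity they stabilize to an ideal $I$. Any prime $\frq\supseteq I$ then satisfies $(I_m)_\frq\neq R_\frq$ for \emph{every} $m$ simultaneously, which is exactly the statement that $\lambda$ is an $F$-jumping exponent of $\fra R_\frq$. This is what your argument for (i) is missing: you reduce to finding a $\mu<\lambda$ with $\tau(\fra^{\mu'})$ constant for $\mu'\in[\mu,\lambda)$, and then pick a prime where $g\in\tau(\fra^\mu)\setminus\tau(\fra^\lambda)$ survives localization. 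But such a $\mu$ need not exist — it fails precisely when $\lambda$ is an accumulation point of jumping exponents from below, which is the case most relevant to the lemma's purpose. And without it, localizing at a prime $\frq$ where $\tau((\fra R_\frq)^\mu)\neq\tau((\fra R_\frq)^\lambda)$ only tells you there is a jump \emph{somewhere} in $(\mu,\lambda]$, not that the jump occurs at $\lambda$; the required prime $\frq$ depends on $\mu$, and you have not produced one working for all $\mu<\lambda$ at once.

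Your part (ii) has a second, independent problem: the final step (ruling out $\frq$ maximal) appeals to the local discreteness of jumping exponents, ``by induction or by the completion reduction.'' That is circular: Lemma~\ref{key} is used \emph{to set up} the induction in the proof of Theorem~\ref{theorem_introduction}, and moreover the lemma makes no principality assumption while the discreteness theorem does, so there is no prior result to cite. The paper instead gives a self-contained argument: with $I$ as above, each quotient $\tau(\fra^{\lambda_m})/\tau(\fra^\lambda)$ (for $m\gg0$) is a finitely generated $R/I$-module, and the accumulation hypothesis makes the chain of these quotients strictly decreasing and infinite; a finitely generated module over an Artinian ring has finite length, so $R/I$ cannot be Artinian, hence $\dim(R/I)\geq1$ and one may choose $\frq\supseteq I$ non-maximal. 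Your Noetherian-topology argument (intersecting the nested closed sets $\Spec(R)\setminus U_\delta$) can indeed produce a prime at which $\lambda$ is an accumulation point of jumping exponents, but it gives no reason that this prime is not maximal — the closed set in question could perfectly well be a single closed point — so the non-maximality still has to come from somewhere, and the module-theoretic non-Artinian argument is the clean way to get it.
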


\begin{proof}
We may assume that $\lambda>0$, and let us fix
 a strictly increasing sequence of positive numbers $\{\lambda_m\}_m$, with
$\lim_{m\to\infty}\lambda_m=\lambda$. For every $m$ we have
$\tau(\fra^{\lambda})\subseteq\tau(\fra^{\lambda_{m+1}})
\subseteq\tau(\fra^{\lambda_m})$. Let $I_m$ be the ideal
$$(\tau(\fra^{\lambda})\colon \tau(\fra^{\lambda_m}))=\{h\in R\mid
h\cdot\tau(\fra^{\lambda_m})\subseteq\tau(\fra^{\lambda})\}.$$
Therefore $I_m\subseteq I_{m+1}$ for every $m$, and since $R$ is
Noetherian, there is an ideal $I$ such that $I_m=I$ for all $m\gg
0$.

Note that $\lambda$ is an $F$-jumping exponent of $\fra$ if and only
if for every $m$ we have $\tau(\fra^{\lambda}) \neq
\tau(\fra^{\lambda_m})$, or equivalently, $I_m\neq R$. Moreover,
$\lambda$ is an accumulation point of $F$-jumping exponents if and
only if $\tau(\fra^{\lambda_m})\neq\tau(\fra^{\lambda_{m+1}})$ for
every $m$.

If $\lambda$ is an $F$-jumping exponent of $\fra$, let $\frq$ be a
minimal prime containing $I$. Since
$$(\tau((\fra R_{\frq})^{\lambda})\colon \tau((\fra R_{\frq})^{\lambda_m}))=
(\tau(\fra^{\lambda}) R_{\frq}\colon \tau(\fra^{\lambda_m})
R_{\frq})=I_{\frq}\neq R_{\frq},$$ it follows that $\lambda$ is an
$F$-jumping exponent of $\fra R_{\frq}$. This gives i).

We show now that if $\lambda$ is an accumulation point of
$F$-jumping exponents, then we can find $\frq$ as above that is not
a maximal ideal. Equivalently, we need to show that $\dim(R/I)\geq
1$, i.e. $R/I$ is not Artinian. By assumption, if $m\gg 0$ then we
have a strictly decreasing sequence
$$\tau(\fra^{\lambda_m})/\tau(\fra^{\lambda})\supsetneq\tau(\fra^{\lambda_{m+1}})/\tau(\fra^{\lambda})\supsetneq\ldots$$
of finitely generated $R/I$-modules. Therefore $R/I$ cannot be
Artinian, and we get ii).
\end{proof}

\begin{proof}[Proof of Theorems~\ref{theorem_introduction}]
 Since $R$ is a
regular ring, we may write $R=R_1\times\ldots\times R_m$, where
$R_i$ are regular domains. If we write $f=(f_1,\ldots,f_m)$, then
the set of $F$-jumping exponents of $f$ is the union of the sets of
$F$-jumping exponents of each of the $f_i$. Therefore in order to
prove Theorem~\ref{theorem_introduction} for $R$, we may assume that
$R$ is a domain and that $f\neq 0$, the case $f=0$ being trivial.

We have seen that for every $R$ and $f$, the discreteness of the set
of $F$-jumping exponents implies the rationality of every such
exponent. Conversely, if we know that all $F$-jumping exponents are
rational, then they form a discrete set. Indeed, if $\alpha$ is an
accumulation point of $F$-jumping exponents, then $\alpha$ is an
$F$-jumping exponent, too, hence $\alpha\in\QQ$. We can find a
positive integer $m$ such that the denominator of $p^m\alpha$ is not
divisible by $p$. For every $F$-jumping exponent $\beta$, $p^m\beta$
is again an $F$-jumping exponent. Therefore also $p^m\alpha$ is an
accumulation point of $F$-jumping exponents, contradicting
Theorem~\ref{thm4} (see also Remark\ref{rem_thm4}).

We claim that it is enough to prove
Theorem~\ref{theorem_introduction} when $R$ is local. Indeed, given
an arbitrary regular $F$-finite local ring, if $\alpha$ is an
$F$-jumping exponent of $f$, then $\alpha$ is also an $F$-jumping
exponent for $fR_{\frq}$ for some prime $\frq$, by Lemma~\ref{key}.
Hence $\alpha\in\QQ$, by the local case, and as we have seen, this
implies that $R$ satisfies the theorem.

Suppose now that $R$ is local. In particular, $\dim(R)<\infty$, and
we prove the statement by induction on $\dim(R)$. The case
$\dim(R)=0$ is trivial, hence we may assume that the theorem holds
for local rings of dimension $<\dim(R)$. If $\alpha$ is an
accumulation point of $F$-jumping exponents, then Lemma~\ref{key}
implies that there is a prime ideal $\frq$ in $R$, different from
the maximal ideal, such that $\alpha$ is an $F$-jumping exponent for
$fR_{\frq}$. Since $\dim(R_{\frq})<\dim(R)$, we may apply induction
to conclude that $\alpha\in\QQ$. Arguing as before, we get a
contradiction with Theorem~\ref{thm4}. This implies that the set of
$F$-jumping numbers of $f$ is discrete, and as a consequence, it
contains only rational numbers.
\end{proof}

\section{Limits of $F$-pure thresholds}\label{sec.limFthr}

The $F$-pure threshold of an ideal has been introduced in \cite{TW}
and further studied in \cite{MTW}. We start by recalling some basic
properties. In fact, it is convenient to consider more generally the
interpretation of all $F$-jumping exponents of an ideal as
$F$-thresholds, as follows. We refer for details and proofs to
\cite{BMS}.

Suppose that $R$ is an $F$-finite regular ring, $\fra$ is an ideal
in $R$, and $J$ is an ideal such that $\fra\subseteq {\rm Rad}(J)$.
For every $e$ let $\nu(p^e)$ denote the largest $r$ such that
$\fra^r\not\subseteq J^{[p^e]}$ (if there is no such $r$, we put
$\nu(p^e)=0$). By the flatness of the Frobenius one has that
$$\nu(p^e)/p^e\leq\nu(p^{e+1})/p^{e+1}$$
for every $e$. The limit
$$c^J(\fra)=\lim_{e\to\infty}\frac{\nu(p^e)}{p^e}=\sup_{e\geq 1}\frac{\nu(p^e)}{p^e}$$
is finite and it is called the $F$-threshold of $\fra$ with respect
to $J$. It was shown in \cite{BMS}, Corollary 2.24, that the set of
all $F$-thresholds of $\fra$ (when we vary $J$) is equal to the set
of $F$-jumping exponents of $\fra$. More precisely, $c^J(\fra)$ is
the smallest $\lambda$ such that $\tau(\fra^{\lambda})\subseteq J$.
We mention that one can show that for every $e$ we have a strict
inequality $\nu(p^e)/p^e<c^J(\fra)$ (see Proposition~1.7 in
\cite{MTW}).

Note that $\tau((\fra^m)^{\lambda})=\tau(\fra^{m\lambda})$ for every
$\lambda$, hence $c^J(\fra^m)=c^J(\fra)/m$ for every $J$. On the
other hand, we have $c^{J^{[p]}}(\fra)=p\cdot c^J(\fra)$.

\begin{remark}\label{case_principal}
When $\fra=(f)$ is principal, then $f^r\in J^{[p^e]}$ implies
$f^{pr}\in J^{[p^{e+1}]}$. Therefore we have
$$\frac{\nu(p^{e+1})+1}{p^{e+1}}\leq\frac{\nu(p^e)+1}{p^e},$$
hence $c^J(\fra)=\inf_e\frac{\nu(p^e)+1}{p^e}$. It follows that for
every ideal $J$ and every $e$, we have $\nu(p^e)+1=\lceil
c^J(f)p^e\rceil$.
\end{remark}

\smallskip

Suppose now that $R$ is a domain. If $(0)\neq\fra\neq R$, then
$\tau(\fra^0)=R$, and $\tau(\fra^{\lambda})\neq R$ for $\lambda\gg
0$ (in fact, the test ideal is contained in $\fra$ for $\lambda\gg
0$: see, for example, Proposition 2.20 in \cite{BMS}). The $F$-pure
threshold $\fpt(\fra)$ is defined as the smallest positive
$F$-jumping exponent of $\fra$, i.e. it is the smallest $\lambda$
such that $\tau(\fra^{\lambda})\neq R$. We make the convention that
$\fpt(0)=0$ and $\fpt(R)=\infty$.

If $R=R_1\times\cdots\times R_m$ and
$\fra=\fra_1\times\cdots\times\fra_m$, then we define the $F$-pure
threshold of $\fra$ by $\fpt(\fra)=\min_i\fpt(\fra_i)$. We can use
this to reduce the computation of $F$-pure thresholds to the case
when $R$ is a domain, which we will do henceforth.

It is clear from the definition that if $\fra\subseteq\frb$, then
$\fpt(\fra)\leq \fpt(\frb)$. Note also that
$\fpt(\fra^m)=\fpt(\fra)/m$. We record in the following proposition
a few other useful properties of $F$-pure thresholds.

\begin{proposition}\label{F_thresholds}
Let $\fra$ be an ideal in $R$.
\begin{enumerate}
\item If $(R,\frm)$ is local
and $\fra\neq R$, then $\fpt(\fra)=c^{\frm}(\fra)$.
\item If $S$ is a multiplicative system in $R$, then
$\fpt(S^{-1}\fra)\geq \fpt(\fra)$.
\item If $\frm$ is a maximal ideal containing $\fra$, then $\fpt(\fra
R_{\frm})=c^{\frm}(\fra)$.
\item We have $\fpt(\fra)=\min_{\frq}\fpt(\fra R_{\frq})$, where
the minimum is either over the prime ideals, or over the maximal
ideals of $R$.
\item If $R$ is a local ring and $\widehat{R}$ is its completion, then $\fpt(\fra\widehat{R})=\fpt(\fra)$.
\end{enumerate}
\end{proposition}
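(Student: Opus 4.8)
The plan is to translate each statement into the ideal-theoretic language already available and then invoke the recorded facts: $\tau((S^{-1}\fra)^\lambda)=S^{-1}\tau(\fra^\lambda)$, $\tau((\fra\widehat R)^\lambda)=\tau(\fra^\lambda)\widehat R$, the fact (\cite{BMS}, Corollary 2.24) that $c^J(\fra)$ is the \emph{smallest} $\lambda$ with $\tau(\fra^\lambda)\subseteq J$, and the definition that $\fpt(\fra)$ is the \emph{smallest} $\lambda$ with $\tau(\fra^\lambda)\neq R$. The degenerate cases $\fra=(0)$ and $\fra=R$ are handled by the conventions $\fpt(0)=0$, $\fpt(R)=\infty$, so I may assume $(0)\neq\fra\neq R$ throughout.

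For (i), in a local ring $(R,\frm)$ an ideal is proper precisely when it lies in $\frm$; thus $\tau(\fra^\lambda)\neq R\iff\tau(\fra^\lambda)\subseteq\frm$, and the least $\lambda$ satisfying one side equals the least satisfying the other, i.e. $\fpt(\fra)=c^\frm(\fra)$ (note $\fra\subseteq\frm=\mathrm{Rad}(\frm)$, so $c^\frm(\fra)$ makes sense). For (ii), if $\mu<\fpt(\fra)$ then $\tau(\fra^\mu)=R$, hence $\tau((S^{-1}\fra)^\mu)=S^{-1}\tau(\fra^\mu)=S^{-1}R$; so every $\mu$ below $\fpt(\fra)$ is also below $\fpt(S^{-1}\fra)$, giving $\fpt(S^{-1}\fra)\geq\fpt(\fra)$. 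For (iii), the ring $R_\frm$ is regular and $F$-finite, so by (i) $\fpt(\fra R_\frm)=c^{\frm R_\frm}(\fra R_\frm)$; and since $\frm$ is maximal we have $\tau(\fra^\lambda)R_\frm\subseteq\frm R_\frm\iff\tau(\fra^\lambda)\subseteq\frm$, so using $\tau((\fra R_\frm)^\lambda)=\tau(\fra^\lambda)R_\frm$ the quantities $c^{\frm R_\frm}(\fra R_\frm)$ and $c^\frm(\fra)$ are the least $\lambda$ for equivalent conditions, hence equal.

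For (iv), I would show $\tau(\fra^\lambda)\neq R$ if and only if there is a maximal ideal $\frm$ with $\lambda\geq\fpt(\fra R_\frm)$: for the forward implication pick a maximal $\frm\supseteq\tau(\fra^\lambda)$ and note $\tau((\fra R_\frm)^\lambda)=\tau(\fra^\lambda)R_\frm\subseteq\frm R_\frm\neq R_\frm$, so by (i)/(iii) $\lambda\geq\fpt(\fra R_\frm)$; for the converse, $\fpt(\fra R_\frm)\leq\lambda$ forces $\tau(\fra^\lambda)R_\frm\neq R_\frm$, i.e. $\tau(\fra^\lambda)\subseteq\frm$. Taking the infimum over $\lambda$ gives $\fpt(\fra)=\inf_{\frm}\fpt(\fra R_\frm)$ over maximal $\frm$; evaluating the criterion at $\lambda=\fpt(\fra)$ (where $\tau(\fra^\lambda)\neq R$ already holds) exhibits a maximal $\frm$ with $\fpt(\fra R_\frm)\leq\fpt(\fra)$, so with (ii) this infimum is a minimum attained at $\frm$. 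Enlarging the index set to all primes can only decrease the minimum, while (ii) applied to $S=R\setminus\frq$ shows $\fpt(\fra R_\frq)\geq\fpt(\fra)$ for every prime $\frq$; hence the minimum over primes equals $\fpt(\fra)$ as well. For (v), $\widehat R$ is again a regular ($F$-finite) local ring, in particular a domain; since completion is faithfully flat an ideal $I$ of $R$ is proper iff $I\widehat R$ is proper, and combined with $\tau((\fra\widehat R)^\lambda)=\tau(\fra^\lambda)\widehat R$ this yields $\tau(\fra^\lambda)\neq R\iff\tau((\fra\widehat R)^\lambda)\neq\widehat R$, whence $\fpt(\fra)=\fpt(\fra\widehat R)$.

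All of this is bookkeeping once the dictionary is in place; the only step requiring genuine care is (iv), where one must verify both that the infimum over maximal ideals is actually attained (which falls out of the ``contained in a maximal ideal'' argument together with (ii)) and that replacing the minimum over maximal ideals by the minimum over all primes does not change its value.
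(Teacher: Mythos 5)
Your proof is correct and follows essentially the same approach as the paper: reduce to the domain case, then use the recorded compatibility of test ideals with localization and completion together with the characterization of $c^J(\fra)$ as the least $\lambda$ with $\tau(\fra^\lambda)\subseteq J$. The only minor divergence is in (iii), where the paper argues directly at the level of $\nu$-values using that $\frm^{[p^e]}$ is $\frm$-primary (so $(\frm R_\frm)^{[p^e]}\cap R=\frm^{[p^e]}$), while you instead pass through the test-ideal characterization and the identity $\frm R_\frm\cap R=\frm$; both are valid one-line reductions.
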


\begin{proof}
For all assertions we may assume that $R$ is a domain and that
$\fra$ is a proper nonzero ideal. (i) is clear, and (ii) follows
from the fact taking test ideals commutes with localization. For
(iii), note that since $\frm^{[p^e]}$ is $\frm$-primary, we get
$(\frm R_{\frm})^{[p^e]}\cap R=\frm^{[p^e]}$, which gives our
statement.

In order to prove (iv), let $c=\fpt(\fra)$. If $I=\tau(\fra^c)$,
then for every prime ideal $\frq$ containing $I$ we have $\tau((\fra
R_{\frq})^c)=I_{\frq}\neq R_{\frq}$, hence $c=\fpt(\fra R_{\frq})$.
(v) follows from the fact that taking test ideals commutes with
completion.
\end{proof}

The following lemma will allow us to approximate arbitrary $F$-pure
thresholds by $F$-pure thresholds of polynomials. The statement can
be found in \cite{MTW}, but we give the proof for completeness.

\begin{lemma}
If $J$ is an ideal in $R$, and $\fra$, $\frb$ are ideals contained
in ${\rm Rad}(J)$, then $c^J(\fra+\frb)\leq c^J(\fra)+c^J(\frb)$.
\end{lemma}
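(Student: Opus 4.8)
The plan is to prove the subadditivity inequality $c^J(\fra+\frb)\leq c^J(\fra)+c^J(\frb)$ directly from the definition of $\nu$ via a binomial-expansion argument, exactly as in the classical proof of subadditivity for log canonical thresholds. First I would set $\nu_\fra(p^e)$, $\nu_\frb(p^e)$, $\nu_{\fra+\frb}(p^e)$ for the three ideals (all with respect to the same $J$), and recall that $c^J$ of each is the limit $\lim_e \nu(p^e)/p^e = \sup_e \nu(p^e)/p^e$. The goal is then to bound $\nu_{\fra+\frb}(p^e)$ from above by roughly $\nu_\fra(p^e)+\nu_\frb(p^e)$, up to an additive constant that washes out in the limit.

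The key step is the observation that $(\fra+\frb)^{r+s} = \sum_{i+j=r+s}\fra^i\frb^j$, and if we take $r=\nu_\fra(p^e)+1$ and $s=\nu_\frb(p^e)+1$, then in every term $\fra^i\frb^j$ of this sum we have either $i\geq \nu_\fra(p^e)+1$ or $j\geq \nu_\frb(p^e)+1$ (since $i+j = r+s$ forces at least one of the two); hence $\fra^i\subseteq J^{[p^e]}$ or $\frb^j\subseteq J^{[p^e]}$, and in either case $\fra^i\frb^j\subseteq J^{[p^e]}$. Therefore $(\fra+\frb)^{r+s}\subseteq J^{[p^e]}$, which by definition of $\nu$ gives $\nu_{\fra+\frb}(p^e) < r+s = \nu_\fra(p^e)+\nu_\frb(p^e)+2$, i.e.
\[
\frac{\nu_{\fra+\frb}(p^e)}{p^e} \;\leq\; \frac{\nu_\fra(p^e)}{p^e} + \frac{\nu_\frb(p^e)}{p^e} + \frac{2}{p^e}.
\]
Letting $e\to\infty$ and using that each quotient converges to the corresponding $F$-threshold, the error term $2/p^e$ vanishes and we obtain $c^J(\fra+\frb)\leq c^J(\fra)+c^J(\frb)$.

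I should also check the trivial hypothesis bookkeeping: since $\fra,\frb\subseteq \mathrm{Rad}(J)$ we also have $\fra+\frb\subseteq\mathrm{Rad}(J)$, so $c^J(\fra+\frb)$ is defined and finite, and the degenerate cases where some $\nu(p^e)=0$ are absorbed into the same inequality (the bound $(\fra+\frb)^{r+s}\subseteq J^{[p^e]}$ still holds with $r$ or $s$ equal to $1$). There is no real obstacle here; the only mildly delicate point is making sure the combinatorial claim "$i+j=r+s$ implies $i\geq r$ or $j\geq s$" is stated correctly (it is just the pigeonhole: if $i<r$ and $j<s$ then $i+j<r+s$), and that the passage to the limit is legitimate, which it is because all three sequences $\nu(p^e)/p^e$ are known to converge to their respective $F$-thresholds.
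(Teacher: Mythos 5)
Your proof is correct and takes essentially the same approach as the paper's: choose $r,s$ with $\fra^r,\frb^s\subseteq J^{[p^e]}$, observe via the binomial expansion that $(\fra+\frb)^{r+s}\subseteq J^{[p^e]}$, and pass to the limit. You have simply unpacked the paper's one-line ``it is clear that'' by spelling out the pigeonhole argument on the indices $i+j=r+s$, which is a reasonable amount of detail for a self-contained write-up.
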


\begin{proof}
If $e\geq 1$ and $r$, $s$ are such that $\fra^r\subseteq J^{[p^e]}$
and $\frb^s\subseteq J^{[p^e]}$, then it is clear that
$(\fra+\frb)^{r+s}\subseteq J^{[p^e]}$. The assertion of the lemma
follows now from the definition of $F$-thresholds.
\end{proof}

\begin{corollary}\label{cor3}
If $\frm^{[p^s]}\subseteq J\subseteq\frm$ for some maximal ideal
$\frm$ and some $s\geq 1$, and if $f$, $g\in \frm$ are such that
$f-g\in\frm^N$, then
$$|c^{J}(f)-c^J(g)|\leq \frac{p^s\cdot \dim(R_{\frm})}{N}.$$
\end{corollary}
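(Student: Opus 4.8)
The plan is to estimate $c^J(f)$ and $c^J(g)$ against each other by playing off the combinatorial definition of $\nu(p^e)$ via the Frobenius powers $J^{[p^e]}$, exploiting that $f-g\in\frm^N$ is a ``high-order'' perturbation and that $\frm^{[p^s]}\subseteq J$ forces $J^{[p^e]}$ to contain $\frm^{[p^{s+e}]}$, i.e. $\frm^{p^{s+e}}$. First I would fix $e\geq 1$ and let $r=\nu^g(p^e)$ be the largest integer with $g^r\notin J^{[p^e]}$, so that $c^J(g)\geq r/p^e$ while $c^J(g)\leq (r+1)/p^e$ by Remark~\ref{case_principal}. The goal is to show that a slightly larger power of $f$ also fails to lie in $J^{[p^e]}$, namely that $f^{r+t}\notin J^{[p^e]}$ for a controlled $t$, which then yields $c^J(f)\geq (r+t)/p^e \geq c^J(g) - 1/p^e + t/p^e$, and by symmetry the desired two-sided bound once $t$ and $e$ are chosen appropriately.

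The key computation is the binomial-type expansion $f^{r+t} = (g+(f-g))^{r+t} = \sum_{i=0}^{r+t}\binom{r+t}{i} g^{r+t-i}(f-g)^i$. The point is that every term with $i\geq 1$ lies in $(\frm^N)^i\subseteq\frm^N$, and more usefully, I want to arrange that every term except the $i=0$ term $g^{r+t}$ — or rather, every ``bad'' contribution — already lies in $J^{[p^e]}$, while $g^{r+t}\notin J^{[p^e]}$ for a suitable $t$ (this last is not automatic since $r$ was maximal for $g$; instead I would compare $f^{r}$ directly). Let me restructure: compute $f^{r} = g^{r} + \sum_{i\geq 1}\binom{r}{i}g^{r-i}(f-g)^i$, where the $i$-th summand lies in $\frm^{Ni}$ (using only $f-g\in\frm^N$) — actually I also get a factor $g^{r-i}\in\frm^{r-i}$, but the cleanest bound uses $\frm^{Ni}\subseteq\frm^N$ when $i\geq 1$. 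I would then invoke that whenever $Ni \geq p^{s+e}\cdot\dim(R_\frm)$ — more precisely, whenever the term lies in $\frm^{[p^e]\cdot\text{something}}$ — it is swallowed by $J^{[p^e]}$. Concretely, since $\frm^{[p^s]}\subseteq J$, raising to the $[p^e]$ gives $\frm^{[p^{s+e}]}\subseteq J^{[p^e]}$, and $\frm^{[p^{s+e}]}\supseteq \frm^{p^{s+e}\cdot\dim(R_\frm)}$ is false in general; the correct containment is $\frm^{d(q-1)+1}\subseteq\frm^{[q]}$ for $q=p^{s+e}$, $d=\dim(R_\frm)$ (generators to the $q$-th power generate $\frm^{[q]}$, and any monomial of degree $\geq d(q-1)+1$ in $d$ generators has some exponent $\geq q$). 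So any summand in $\frm^{p^{s+e}d}$ already lies in $J^{[p^e]}$.

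Putting this together: the summand $\binom{r}{i}g^{r-i}(f-g)^i$ for $i\geq 1$ lies in $\frm^{Ni}$, hence in $J^{[p^e]}$ as soon as $Ni \geq p^{s+e}d$, i.e. $i\geq p^{s+e}d/N$. For the finitely many $i$ with $1\leq i < p^{s+e}d/N$ I instead use the factor $g^{r-i}$: if $r$ is large enough that $r-i$ already makes $g^{r-i}$ divisible into $J^{[p^e]}$... but that contradicts maximality only if $r-i \geq$ (the threshold for $g$), which may fail. The honest fix, and I think the intended one, is to localize at $\frm$ (allowed by Proposition~\ref{F_thresholds}(iii), replacing $\dim R$ by $\dim R_\frm$) and argue: choose $t = \lceil p^{s+e}d/N\rceil$ and show $f^{r+t}\in J^{[p^e]}$ whenever $g^r\in J^{[p^e]}$, by expanding $f^{r+t}=(g+(f-g))^{r+t}$ and checking each monomial $g^{a}(f-g)^{b}$ with $a+b=r+t$ lies in $J^{[p^e]}$: if $b\geq t$ then it is in $\frm^{Nt}\subseteq\frm^{p^{s+e}d}\subseteq J^{[p^e]}$; if $b< t$ then $a > r$, so $g^a = g^{a-r}\cdot g^r \in J^{[p^e]}$ since $g^r\in J^{[p^e]}$ (here $g^r\in J^{[p^e]}$ is the hypothesis when $r>\nu^g(p^e)$, so run this with $r=\nu^g(p^e)+1$). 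Hence $f^{\nu^g(p^e)+1+t}\in J^{[p^e]}$, giving $\nu^f(p^e)\leq \nu^g(p^e)+t$, so $c^J(f)\leq (\nu^f(p^e)+1)/p^e \leq (\nu^g(p^e)+1)/p^e + t/p^e$, and letting $e\to\infty$ (noting $t/p^e = \lceil p^{s+e}d/N\rceil/p^e \to p^s d/N$) gives $c^J(f) \leq c^J(g) + p^s d/N$; symmetry finishes the proof. The main obstacle is precisely this bookkeeping in the mixed case $b<t$ — making sure the split between ``high $(f-g)$-power, use $\frm^N$'' and ``high $g$-power, use the hypothesis'' is clean and that the resulting shift $t$ has the claimed limit; everything else (the containment $\frm^{d(q-1)+1}\subseteq\frm^{[q]}$, commuting with completion/localization, Remark~\ref{case_principal}) is routine.
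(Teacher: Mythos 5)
Your proof is correct, and it reaches the same bound, but it takes a noticeably more hands-on route than the paper. The paper's proof is two lines: since $(f)\subseteq (g)+\frm^N$, the preceding lemma (subadditivity $c^J(\fra+\frb)\leq c^J(\fra)+c^J(\frb)$) gives $|c^J(f)-c^J(g)|\leq c^J(\frm^N)$, which is then bounded by $c^{\frm^{[p^s]}}(\frm^N)=p^s\cdot c^{\frm}(\frm^N)=p^s\dim(R_\frm)/N$ using the three $c^J$-calculus identities: monotonicity in $J$, the scaling rule $c^{J^{[p]}}(\fra)=p\,c^J(\fra)$, the homogeneity $c^J(\fra^m)=c^J(\fra)/m$, and finally $c^\frm(\frm)=\dim R_\frm$. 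You instead re-derive the mechanism behind all of these at once: you expand $f^{r+t}=(g+(f-g))^{r+t}$ and show each monomial $g^a(f-g)^b$ lies in $J^{[p^e]}$, splitting on whether $b\geq t$ (use $(f-g)^b\in\frm^{Nt}\subseteq\frm^{p^{s+e}\dim R_\frm}\subseteq\frm^{[p^{s+e}]}\subseteq J^{[p^e]}$) or $b<t$ (use $g^r\in J^{[p^e]}$ for $r=\nu^g(p^e)+1$), then take $e\to\infty$ via Remark~\ref{case_principal}. Your use of the pigeonhole containment $\frm^{d(q-1)+1}\subseteq\frm^{[q]}$ plays exactly the role that $c^{\frm}(\frm)=\dim R_\frm$ plays in the paper, and your binomial expansion is precisely the hidden content of the paper's Lemma on subadditivity, so the two arguments are parallel at the level of ideas; the paper simply packages the combinatorics into reusable identities on $c^J$, whereas you inline them. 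Your treatment of the localization step (pass to $R_\frm$ using that $J$ is $\frm$-primary) and the limit $\lceil p^{s+e}d/N\rceil/p^e\to p^sd/N$ are both fine. The main thing you lose by not invoking the subadditivity lemma is brevity; what you gain is an explicit, self-contained account of where the constants come from.
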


\begin{proof}
Since $f-g\in \frm^N$, the lemma gives
$$|c^J(f)-c^J(g)|\leq c^J(\frm^N)\leq c^{\frm^{[p^s]}}(\frm^N)=\frac{p^s\cdot
\dim(R_{\frm})}{N},$$ where we use the fact that
$c^{\frm}(\frm)=\dim(R_{\frm})$.
\end{proof}

\smallskip

We turn now to the study of the set of $F$-pure thresholds of
principal ideals in bounded dimension. Recall that since
$\tau(f)=(f)$, we have $\fpt(f)\leq 1$ for every non-invertible $f$
in a domain $R$. If $k$ is a field and $f\in k[x_1,\ldots,x_n]$ is
such that $f(0)=0$, then we denote by $\fpt_0(f)$ the $F$-pure
threshold of the image of $f$ in
$k[x_1,\ldots,x_n]_{(x_1,\ldots,x_n)}$.

We fix now the characteristic $p$. For $n\geq 1$ and for every field
$k$ of characteristic $p$ we denote by ${\mathcal T}^{\circ}_n(k)$
the set of $F$-pure thresholds $\fpt_0(f)$, where $f\in
k[x_1,\ldots,x_n]$ is such that $f(0)=0$. We also put ${\mathcal
T}_n$ for the set of $F$-pure thresholds $\fpt(g)$, where $g\in R$
is not invertible and $R$ is an $F$-finite regular domain of
dimension $\leq n$.

\begin{theorem}\label{thm5}
If the characteristic $p$ is fixed, then
\begin{enumerate}
\item For every $n$ we have ${\mathcal T}_n^{\circ}(k)\subseteq
{\mathcal T}^{\circ}_{n+1}(k)$.
\item For every field extension $K/k$, we have
${\mathcal T}^{\circ}_n(k)\subseteq {\mathcal T}^{\circ}_n(K)$.
\item The set ${\mathcal T}^{\circ}_n(k)$ does not depend on $k$
if $k$ is algebraically closed (from now on we simply denote this
set by ${\mathcal T}_n^{\circ}$).
\item The set ${\mathcal T}_n$ is contained in the closure of
${\mathcal T}_n^{\circ}$.
\end{enumerate}
\end{theorem}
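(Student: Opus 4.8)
The plan is to prove the four parts in order, since each later part uses the earlier ones. For part (i), the inclusion ${\mathcal T}^\circ_n(k)\subseteq{\mathcal T}^\circ_{n+1}(k)$, I would take $f\in k[x_1,\dots,x_n]$ with $f(0)=0$ and simply view it as an element of $k[x_1,\dots,x_{n+1}]$ that does not involve the last variable. The point is that $\fpt_0$ is unchanged: localizing $k[x_1,\dots,x_{n+1}]$ at the maximal ideal and using that $k[x_1,\dots,x_{n+1}]_{(x_1,\dots,x_{n+1})}$ is faithfully flat over $k[x_1,\dots,x_n]_{(x_1,\dots,x_n)}$ (with the extra variable free), one sees that the test ideals, and hence the smallest jumping exponent, agree. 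Concretely, $\tau(f^\lambda)$ in the larger ring is the extension of $\tau(f^\lambda)$ in the smaller ring, so one is proper iff the other is; this is the flat base change property of test ideals already recorded in the excerpt. Part (ii), that ${\mathcal T}^\circ_n(k)\subseteq{\mathcal T}^\circ_n(K)$ for a field extension $K/k$, is the same kind of argument: $K[x_1,\dots,x_n]_{(x)}$ is faithfully flat over $k[x_1,\dots,x_n]_{(x)}$, test ideals commute with this base change, and $f\in k[x_1,\dots,x_n]$ keeps the same $F$-pure threshold when regarded over $K$.

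For part (iii), I would combine (ii) with a specialization/Noetherianity argument. Given an algebraically closed field $k$ and $f\in k[x_1,\dots,x_n]$, the coefficients of $f$ lie in a finitely generated $\FF_p$-subalgebra $A\subseteq k$; after inverting finitely many elements of $A$ I can arrange that the formation of the relevant test ideals $\tau(f^{\alpha_m})$ (for the finitely many $\alpha_m$ needed to detect $\fpt_0(f)$, using Remark \ref{case_principal} and the identification of the $F$-pure threshold as $c^{\frm}(f)=\inf_e(\nu(p^e)+1)/p^e$ with the stabilization controlled as in Section \ref{sec.ratdisc}) is compatible with passing to residue fields at closed points of $\Spec A$. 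Any two algebraically closed fields of characteristic $p$ contain a common such $A$ after choosing a model and a common closed point, and specializing to the residue field (which is $\overline{\FF_p}$ up to the field extensions handled in (ii)) shows $\fpt_0(f)$ only depends on $p$, not on $k$. The content here is that the test ideal construction is ``constructible'' in the base, which I would extract from \cite{BMS}; this is the step I expect to require the most care, since one must check that the open locus on which the test ideal specializes correctly is nonempty and that the stabilization index in the defining sequence of test ideals can be bounded uniformly.

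For part (iv), that ${\mathcal T}_n\subseteq\overline{{\mathcal T}^\circ_n}$, I would start with $\fpt(g)$ for $g$ a non-invertible element of an $F$-finite regular domain $R$ of dimension $\le n$. By Proposition \ref{F_thresholds}(iv) I may replace $R$ by a localization $R_\frm$ at a maximal ideal, and by Proposition \ref{F_thresholds}(v) by its completion $\widehat{R_\frm}$, which by Cohen's structure theorem and $F$-finiteness is a formal power series ring $k\llbracket x_1,\dots,x_d\rrbracket$ over an $F$-finite field $k$, with $d\le n$; enlarging $k$ further (using a suitable version of Proposition \ref{F_thresholds}(ii)/(v) for the map $k\llbracket x\rrbracket\to K\llbracket x\rrbracket$) I may assume $k$ is algebraically closed, so $\fpt(g)=\fpt_0(f)$ for a power series $f\in k\llbracket x_1,\dots,x_n\rrbracket$ with $f(0)=0$ (padding with dummy variables via part (i)). Now write $f=\lim_N f_N$ where $f_N$ is the truncation of $f$ in degree $<N$, a polynomial; Corollary \ref{cor3}, applied with $\frm=(x_1,\dots,x_n)$, $J=\frm$, and $s$ chosen so that $\frm^{[p^s]}\subseteq\frm$, gives $|c^{\frm}(f)-c^{\frm}(f_N)|\le p^s n/N\to 0$, i.e.\ $\fpt_0(f_N)\to\fpt_0(f)$. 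Since each $\fpt_0(f_N)\in{\mathcal T}^\circ_n$, the limit $\fpt(g)=\fpt_0(f)$ lies in the closure $\overline{{\mathcal T}^\circ_n}$. The main obstacle across the whole argument is the reduction in (iv) to a complete local ring of the prescribed form together with the base-field reduction to an algebraically closed field; once that is set up, Corollary \ref{cor3} does the approximation essentially for free.
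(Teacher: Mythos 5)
Your parts (i)--(iv) reach the right conclusions, but your route for (i)--(iii) is materially different from the paper's and noticeably heavier. For (i) and (ii) you invoke flat base change for test ideals under $k[x]_{(x)}\to k[x,x_{n+1}]_{(x,x_{n+1})}$ and $k[x]_{(x)}\to K[x]_{(x)}$; the paper avoids this entirely by using Proposition~\ref{F_thresholds}(iii) to write $\fpt_0(f)=\lim_e \nu(p^e)/p^e$ with $\nu(p^e)$ the largest $r$ such that $f^r\notin(x_1^{p^e},\ldots,x_n^{p^e})$, and then observes directly that this ideal-membership test is unchanged by adding a dummy variable or extending the coefficient field. That argument needs nothing beyond Frobenius flatness; yours needs a base-change statement for test ideals that the paper does not record (it only records localization and completion), so you would owe a proof of it. For (iii) your ``spread out to a finitely generated $\FF_p$-algebra and specialize to a closed point'' strategy is a genuinely different argument; it can be made to work, but you correctly flag that the constructibility of test ideal formation in families is the step requiring the most care, and it is precisely what the paper sidesteps. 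The paper instead parametrizes degree-$\le d$ polynomials by an affine space over $\FF_p$, uses \cite[Prop.~3.6]{BMS} to reduce to finitely many candidate values $a_1<\cdots<a_m$, and then observes that $\{\fpt_0(f)\le a\}$ is a Zariski-closed condition cut out over $\FF_p$ (again via the elementary membership test $f^{\lfloor ap^e\rfloor+1}\in(x_1^{p^e},\ldots,x_n^{p^e})$), so whether some $f$ achieves $\fpt_0(f)=a_j$ is a statement about nonemptiness of a difference of $\FF_p$-schemes, independent of the algebraically closed field. This is considerably more elementary than a specialization argument.

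For (iv) you essentially follow the paper's reduction (localize, complete, Cohen's structure theorem, truncate, apply Corollary~\ref{cor3}), but you insert an unnecessary and underjustified step: enlarging the residue field $k$ of $\widehat{R_{\frm}}$ to an algebraically closed field inside the power-series ring. This is not needed and is slightly delicate (one must check flatness of $k\llbracket x\rrbracket\to K\llbracket x\rrbracket$, $F$-finiteness of $K\llbracket x\rrbracket$, and compatibility of test ideals with that extension, none of which the paper records). The paper instead observes that $[k:k^p]<\infty$ because $k$ is a quotient of the $F$-finite ring $R$, applies Corollary~\ref{cor3} with $J=\frm$ directly over $k$ to the polynomial truncations $g_{\le d}$, and then uses parts (i) and (ii) to place $\fpt_0(g_{\le d})\in\mathcal{T}_n^\circ(k)\subseteq\mathcal{T}_n^\circ$. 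You should drop the enlargement step and route through (ii) in this way. Also, a small slip: with $J=\frm$, the bound from Corollary~\ref{cor3} is $\dim(R_\frm)/N$, not $p^s\dim(R_\frm)/N$; the extra $p^s$ only appears when $J$ is strictly smaller than $\frm$ and one compares with $\frm^{[p^s]}$. This does not affect the limit, but the sharper bound is what the paper uses.
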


\begin{proof}
If $f\in k[x_1,\ldots,x_n]$, then
$\fpt_0(f)=\lim_{e\to\infty}\nu(p^e)/p^e$, where $\nu(p^e)$ is the
largest $r$ such that $f^r\not\in (x_1^{p^e},\ldots,x_n^{p^e})$. It
is clear that $\nu(p^e)$ does not change if we consider instead $f$
in $k[x_1,\ldots,x_n]$ or in $K[x_1,\ldots,x_n]$, for some field
extension $K$ of $k$. This gives the first two assertions.

 For (iii), it is enough to show that for every $d\geq 1$ and
$c>0$, the existence of $f\in k[x_1,\ldots,x_n]_{\leq d}$ (these are
the polynomials of degree $\leq d$) with $\fpt_0(f)=c$ does not
depend on the algebraically closed field $k$. Note that
$$\{f\in k[x_1,\ldots,x_n]_{\leq d}\mid f(0)=0\} \cong k^N,$$
 can be viewed as the set of $k$-valued points of a suitable
 affine space $\AAA^N$. Recall first from \cite[Proposition~3.6]{BMS} that
the denominators of the $F$-jumping numbers of $f$ in the above set
are bounded in terms of $d$, $n$ and the characteristic $p$; in
particular, the bound is independent of the field $k$. Therefore we
have $0<a_1<\ldots<a_{m(d,n,p)}\leq 1$ such that for every nonzero
$f$ of degree $\leq d$ and with $f(0)=0$, we have $\fpt_0(f)=a_i$
for some $i$. From now on we may assume that $c=a_j$ for some $j$.

On the other hand, for every $a>0$, the condition that
$\fpt_0(f)\leq a$ is equivalent with $f^{\lfloor ap^e\rfloor+1}\in
(x_1^{p^e},\ldots,x_n^{p^e})$ for every $e$. Here we have denoted by
$\lfloor u\rfloor$ the largest integer $\leq u$. Since the degree of
$f$ is bounded above by $d$, it follows that the condition for $f$
to have $\fpt_0(f)\geq a$ is an intersection of closed conditions
defined over ${\mathbb F}_p$, hence it is closed and defined over
${\mathbb F}_p$. In other words, there is a closed subscheme
$Z_a\subseteq \AAA^N_{{\mathbb F}_p}$ such that $f\in
k[x_1,\ldots,x_n]_{\leq d}$ with $f(0)=0$ has $\fpt_0(f)\leq a$ if
and only if $f$ corresponds to a $k$-valued point of $Z_a$.

Given $c=a_j$, let us choose $a$ and $b$ such that
$a_{j-1}<a<a_j<b<a_{j+1}$. It follows that  there is $f\in
k[x_1,\ldots,x_n]_{\leq d}$ with $\fpt_0(f)=a_j$ if and only if
$Z_a(k)\neq Z_b(k)$. This condition does not depend on $k$ if $k$ is
algebraically closed, which proves (iii).

In order to prove (iv), consider $c\in {\mathcal T}_n$. By
definition, there is an $F$-finite regular ring $R$ of dimension
$\leq n$ and $g\in R$ non-invertible such that $c=\fpt(g)$. By (iv)
and (v) in Proposition~\ref{F_thresholds}, we may assume that $R$ is
local and complete. Since $R$ is regular and contains a field,
Cohen's Structure Theorem implies that $R\simeq k\llbracket
x_1,\ldots,x_m\rrbracket$ for some field $k$ and some $m\leq n$.
Since $k$ is a quotient of $R$, it follows that $[k\colon
k^p]<\infty$. Using again Proposition~\ref{F_thresholds} we see that
if $f\in k[x_1,\ldots,x_m]$ with $f(0)$, then $\fpt_0(f)$ is equal
to its $F$-pure threshold in $R$. If we denote by $g_{\leq d}$ the
truncation of $g$ up to degree $\leq d$, it follows from
Corollary~\ref{cor3} that
$$|\fpt_0(g_{\leq d})-\fpt(g)|\leq\frac{m}{d+1}.$$
This implies that $c=\fpt(g)$ is in the closure of ${\mathcal
T}^{\circ}_n$.
\end{proof}

\smallskip

Our next goal is to prove Theorem~\ref{theorem3_introduction}. In
order to show that ${\mathcal T}_n$ is closed, the key idea is to
associate to a sequence of polynomials $f_m\in k[x_1,\ldots,x_n]$
with $\lim_{m\to\infty}\fpt_0(f_m)=\alpha$ a formal power series $f$
over some extension field of $k$ such that $\fpt(f)=\alpha$. This
will be done using some basic constructions from non-standard
analysis.  We briefly present these constructions, and refer for
details to \cite{Gol}.

Recall that an \emph{ultrafilter} $\cU$ on the set $\NN$ is a
collection of subsets of $\NN$ with the following properties:
\begin{enumerate}
\item $\emptyset\not\in \cU$.
\item If $A\in {\mathcal U}$ and $B\supseteq A$, then $B\in
\cU$.
\item If $A$, $B\in\cU$, then $A\cap B\in\cU$.
\item If $A\subseteq \NN$, then either $A$ or
$\NN\smallsetminus A$ is in $\cU$.
\end{enumerate}

An ultrafilter ${\mathcal U}$ is \emph{non-principal} if every
subset of $\NN$ whose complement is finite belongs to $\cU$. It
follows from Zorn's Lemma that there are non-principal ultrafilters
on $\NN$, and from now on we fix one such non-principal ultrafilter
$\cU$. Since we will need this later we point out that the
properties (iii) and (iv) easily imply that if $A = U_1\cup \ldots
\cup U_n$ is a finite disjoint union, and if $A\in \cU$, then one
and only one of the $U_i$ is in $\cU$.

If $\{A_m\}_{m\in\NN}$ is a sequence of sets, then one defines on
the product $\prod_{m\in\NN}A_m$ the equivalence relation
$$(a_m)_m\sim (b_m)_m\,\,{\rm iff}\,\,\{m\mid a_m=b_m\}\in\cU.$$
The set of equivalence classes is called the \emph{ultraproduct}
(with respect to the ultrafilter $\cU$) and we denote it here by
$[A_m]$ and the class of $(a_m)_m$ is denoted by $[a_m]$. Similarly,
from a sequence of functions $f_m\colon A_m \to B_m$ we get a
function $[f_m]\colon [A_m] \to[] [B_m]$ that takes $[a_m]$ to
$[f(a_m)]$.

When $A_m=A$ for every $m$, the corresponding ultraproduct is
denoted by $\ltightstar{A}$ and it is called the non-standard
extension of $A$. Note that we have an injective map
$A\hookrightarrow \ltightstar{A}$ that takes $a$ to the class of
$(a,a,\ldots)$. Similarly, a function $u\colon A\to B$ has a
non-standard extension $\ltightstar{u}\colon \ltightstar{A}\to
\ltightstar{B}$. As a general principle one observes that if $A$ has
an algebraic structure, then $\ltightstar{A}$ has a similar
structure, too. For example, $\lstar{\RR}$ is an ordered field, and
if $k$ is an algebraically closed field, then so is
$\ltightstar{k}$.

If we have a sequence of polynomials $f_m\in k[x_1,\ldots,x_n]$,
then we obtain an \emph{internal hyperpolynomial} $F=[f_m]\in
\ltightstar{(k[x_1,\ldots,x_n])}$. We can view any polynomial $g \in
k[x_1,\ldots,x_n]$ (or more generally any power series) as a
function $\NN^n \to k$ given by sending the tuple $(m_1,\ldots,m_n)$
to the coefficient of the monomial $x_1^{m_1}\cdot\ldots\cdot
x_n^{m_n}$ in $g$. Hence we can view $F$ as a function
$(\lstar{\NN})^n\to \ltightstar{k}$. If we restrict this function
$F$ to $\NN^n$, then we get a formal power series $f\in
(\ltightstar{k})\llbracket x_1,\ldots,x_n \rrbracket$. Hence we have
the following natural maps
\[
    {k[x_1,\ldots,x_n]}  \hookrightarrow {\ltightstar{(k[x_1,\ldots,x_n])}}  \to {(\ltightstar{k})\llbracket x_1,\ldots,x_n\rrbracket}
\]
Note that if $f_m(0)=0$ for every $m$, then $f$ lies in the maximal
ideal, i.e. $f(0)=0$. After these preparations, we can prove that
${\mathcal T}_n$ is closed.

\begin{proof}[Proof of Theorem~\ref{theorem3_introduction}]
We fix an algebraically closed field $k$ of characteristic $p$. We
have already seen in Theorem~\ref{thm5} that ${\mathcal
T}_n^{\circ}={\mathcal T}_n^{\circ}(k)$ is independent on the choice
of $k$, and that it is dense in ${\mathcal T}_n$. Therefore, in
order to show that ${\mathcal T}_n$ is closed, it is enough to show
that if we have a sequence $f_m\in k[x_1,\ldots,x_n]$ with
$f_m(0)=0$ and $\lim_{m\to\infty}\fpt_0(f_m)=\alpha$, then
$\alpha\in {\mathcal T_n}$. In fact, we will show that if $f\in
(\ltightstar{k})\llbracket x_1,\ldots,x_n\rrbracket$ is the formal
power series associated to $F=[f_m]$ as above, then
$\alpha=\fpt(f)$. The function $\fpt_0\colon \{g\in
k[x_1,\ldots,x_n]\mid g(0)=0\}\to\RR$ extends to
$$\ltightstar{\fpt_0}\colon\{F\in
\ltightstar{(k[x_1,\ldots,x_n])}\mid F(0)=0\}\to\lstar{\RR}$$ such
that $\ltightstar{\fpt_0}(F)=[\fpt_0(f_m)]\in \lstar{\RR}$.

It is a general fact that for every element $w\in \lstar{\RR}$,
there is a unique real number, its \emph{shadow} denoted by ${\rm
sh}(w)$ such that $|w-{\rm sh}(w)|<\epsilon$ for every positive real
number $\epsilon$ (see \cite{Gol}, \S 5.6). Moreover,  Theorem~6.1
in \emph{loc. cit}.  implies that if $c_m$ is a sequence of real
numbers converging to $c$, then ${\rm sh}([c_m])=c$. Hence in order
to conclude we need to show that ${\rm
sh}(\ltightstar{\fpt_0(F)})=\fpt(f)$. Denoting by
$(\underline{\phantom{m}})^\circ$ the subsets consisting of
polynomials or power series vanishing at zero, this means that we
have to show that the following diagram commutes
\[
\xymatrix{
    {\ltightstar{(k[x_1,\ldots,x_n]^{\circ})}} \ar[r]\ar[d]^{\ltightstar{\fpt_0}}
    &{(\ltightstar{k})\llbracket x_1,\ldots,x_n\rrbracket^{\circ}}\ar[d]^{\fpt_0} \\
    {\lstar{\RR}} \ar[r]^{\sh} & {\RR}
    }
\]
For every positive integer $d$, we denote by $f_{\leq d}$ the
truncation of $f$ of degree $\leq d$. It follows from
Corollary~\ref{cor3} that
\begin{equation}\label{eq101}
|\fpt(f)-\fpt(f_{\leq d})|\leq \frac{n}{d+1}.
\end{equation}
We have by definition $f_{\leq d}=[(f_m)_{\leq d}]$, hence
\begin{equation}\label{eq102}
|\ltightstar{\fpt_0}(F)-\ltightstar{\fpt_0}(f_{\leq
d})|=[|\fpt_0(f_m)-\fpt_0((f_m)_{\leq d})|] \leq n/(d+1).
\end{equation}
 If we show that $\fpt(f_{\leq d})=
\ltightstar{\fpt_0}(f_{\leq d})$, then we are done. Indeed, we
deduce from (\ref{eq101}) and (\ref{eq102}) that
$|\fpt(f)-\ltightstar{\fpt_0}(F)|\leq 2n/(d+1)$ for any $d$. Since
$\fpt(f)\in\RR$, this implies $\fpt(f)={\rm
sh}(\ltightstar{\fpt_0(F)})$.

To simplify the notation we put $g_m=(f_m)_{\leq d}$ and $g=[g_m]$.
It follows from Proposition~\ref{F_thresholds} that for any
polynomial $h\in k[x_1,\ldots,x_n]$ with $h(0)=0$, we may compute
$\fpt_0(h)$ by considering $h$ in $(k^*)\llbracket
x_1,\ldots,x_n\rrbracket$.

Recall that if we bound the degree of a polynomial, then we can also
bound the denominators of its $F$-jumping exponents, independently
of the base field (see Proposition~3.6 in \cite{BMS}). Since the
$F$-pure thresholds of principal ideals are bounded above by $1$, it
follows that there is a finite set of rational numbers $A$ such that
$\fpt(g)\in A$ and $\fpt(g_m)\in A$ for every $m$. This implies that
there is a unique $a\in A$ such that $\{m\in\NN\mid
\fpt(g_m)=a\}\in\cU$.

Let us prove for example that $a\leq \fpt(g)$ (the reverse
inequality follows by an analogous argument). We choose positive
integers $r$ and $e$ such that $a\geq\frac{r+1}{p^e}$ and every
element in $A$ that is $<a$ is also $<\frac{r}{p^e}$. Since
$\fpt(g)\in A$, if we show that $g^r\not\in
(x_1^{p^e},\ldots,x_n^{p^e})$, then $\fpt(g)\geq\frac{r}{p^e}$,
hence $\fpt(g)\geq a$. Note that if for some $m$ we have
$\fpt(g_m)=a$, then using Remark~\ref{case_principal} we get
$g_m^r\not\in (x_1^{p^e},\ldots,x_n^{p^e})$, hence there is a
monomial $x_1^{b_1}\cdots x_n^{b_n}$ with all $b_i\leq p^e-1$ that
does not appear in $g_m$. Note that the set of those $m$ that
satisfy this condition is in $\cU$. Since there are only finitely
many monomials as above, it follows that after possibly passing to a
smaller subset we may assume in addition that the same monomial
works for all these $m$. This means that the coefficient of the
monomial in $g^r=[g_m^r]$ is nonzero, hence $g^r\not\in
(x_1^{p^e},\ldots,x_n^{p^e})$, as required. This proves that
${\mathcal T}_n$ is closed.

The last assertion in the theorem follows since ${\mathcal T}_n$ is
contained in $\QQ$ by Theorem~\ref{theorem_introduction}.

\end{proof}

\section{Remarks and open problems}\label{sec.rem}

Recall that test ideals satisfy the following analogue of the
Subadditivity Theorem for multiplier ideals. If $\fra$ and $\frb$
are ideals in $R$ and if $\lambda\in\RR_+$, then
$$\tau((\fra\frb)^{\lambda})\subseteq\tau(\fra^{\lambda})\cdot\tau(\frb^{\lambda}).$$
See, for example, Lemma~2.10 in \cite{BMS} for a proof. In the case
of a $p$-power, we have the following strengthening.

\begin{proposition}\label{prop_subadditivity}
If $\fra$ is an ideal in $\RR_+$ and if $\lambda\in\RR_+$, then
\begin{equation}\label{eq_subadditivity}
\tau(\fra^{p\lambda})\subseteq \tau(\fra^{\lambda})^{[p]}.
\end{equation}
Moreover, if $\tau(\fra^{p\lambda})\subseteq J^{[p]}$ for some ideal
$J$, then $\tau(\fra^{\lambda})\subseteq J$.
\end{proposition}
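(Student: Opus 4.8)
The plan is to prove both statements by working with the description of test ideals via the $[1/p^e]$-operation and the flatness of Frobenius, which is really the only tool needed here. First I would recall from the definitions (and Lemma~\ref{lem0}-type reasoning) that for $e\gg 0$ one has $\tau(\fra^{p\lambda})=\left(\fra^{\lceil p\lambda p^e\rceil}\right)^{[1/p^e]}$ and $\tau(\fra^{\lambda})=\left(\fra^{\lceil \lambda p^e\rceil}\right)^{[1/p^e]}$. The key numerical point is that $\lceil p\lambda p^e\rceil = \lceil \lambda p^{e+1}\rceil$, so $\tau(\fra^{p\lambda})=\left(\fra^{\lceil \lambda p^{e+1}\rceil}\right)^{[1/p^{e+1}]}$, computed at the next level $e+1$. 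Thus, writing $\frb=\fra^{\lceil \lambda p^{e+1}\rceil}$, the claim~\eqref{eq_subadditivity} amounts to the inclusion $\frb^{[1/p^{e+1}]}\subseteq\left(\frb^{[1/p^e]}\right)^{[p]}$.

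To see this last inclusion, I would unwind the definition of $(-)^{[1/p^e]}$: by Lemma~\ref{lem1}, $\left(\frb^{[1/p^e]}\right)^{[p^e]}$ is the $D_R^e$-submodule of $R$ generated by $\frb$, and in particular $\frb\subseteq\left(\frb^{[1/p^e]}\right)^{[p^e]}$, hence $\frb\subseteq\left(\left(\frb^{[1/p^e]}\right)^{[p]}\right)^{[p^{e+1}]}$. By the very definition of $\frb^{[1/p^{e+1}]}$ as the minimal ideal $J$ with $\frb\subseteq J^{[p^{e+1}]}$, this gives $\frb^{[1/p^{e+1}]}\subseteq\left(\frb^{[1/p^e]}\right)^{[p]}$, which is exactly what we want. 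Translating back, $\tau(\fra^{p\lambda})\subseteq\tau(\fra^{\lambda})^{[p]}$.

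For the second, sharper statement, suppose $\tau(\fra^{p\lambda})\subseteq J^{[p]}$. Using the identification above at level $e+1$, this reads $\frb^{[1/p^{e+1}]}\subseteq J^{[p]}$ for $e\gg 0$, where $\frb=\fra^{\lceil\lambda p^{e+1}\rceil}$. Applying the $[p^{e+1}]$-operation and minimality of $(-)^{[1/p^{e+1}]}$ one gets $\frb\subseteq\left(\frb^{[1/p^{e+1}]}\right)^{[p^{e+1}]}\subseteq\left(J^{[p]}\right)^{[p^{e+1}]}=\left(J^{[p^e]}\right)^{[p^{e+1}]}$ — here I am using $\left(J^{[p]}\right)^{[p^{e+1}]}=\left(J^{[p^{e+1}]}\right)^{[p]}=\left(J^{[p^e]}\right)^{[p^{e+1}]}$, i.e. that these bracket powers commute. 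Wait — more carefully: $\left(J^{[p]}\right)^{[p^{e+1}]} = J^{[p^{e+2}]}$, and I want to conclude $\frb^{[1/p^{e+1}]}\subseteq J^{[p]}$ implies $\frb\subseteq J^{[p^{e+2}]}$, hence $\left(\fra^{\lceil\lambda p^{e+1}\rceil}\right)^{[1/p^{e+1}]}$... let me instead argue directly that $\frb^{[1/p^{e+1}]}\subseteq J^{[p]}$ forces, by applying $(-)^{[1/p]}$ and the inclusion-reversing/minimality properties, $\left(\frb^{[1/p^{e+1}]}\right)^{[1/p]}\subseteq J$; and one checks $\left(\frb^{[1/p^{e+1}]}\right)^{[1/p]}=\frb^{[1/p^{e+2}]}=\tau(\fra^{\lambda})$ (again via $\lceil\lambda p^{e+1}\rceil$ vs. $\lceil \lambda p^{e+2}\rceil$ and the argument of Lemma~\ref{lem0}), giving $\tau(\fra^{\lambda})\subseteq J$.

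The main obstacle, and the step to which I would devote the most care, is the bookkeeping with ceilings and with the commuting/monotonicity properties of the operations $(-)^{[p^e]}$ and $(-)^{[1/p^e]}$ — in particular verifying cleanly that $\left(\frb^{[1/p^{e+1}]}\right)^{[1/p]}=\frb^{[1/p^{e+2}]}$ and that, when $\fra$ is replaced by the relevant power at the relevant level, no off-by-one error in the ceiling creeps in. Once the identity $\tau(\fra^{p\lambda})=\tau(\fra^\lambda)$-at-the-next-level is pinned down and the two lemmas' characterizations of $(-)^{[1/p^e]}$ are in hand, everything else is a short diagram chase using only that $g\in J^{[p^\ell]}\iff g^p\in J^{[p^{\ell+1}]}$ (flatness of Frobenius), exactly as in the proof of Lemma~\ref{lem0}.
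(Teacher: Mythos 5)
Your plan is the right one and uses exactly the tools the paper uses, but you have swapped the roles of the two test ideals at levels $e$ and $e+1$, and as a result the inclusion you reduce to is actually false. From $\tau(\fra^{p\lambda})=\bigl(\fra^{\lceil p\lambda p^e\rceil}\bigr)^{[1/p^e]}$ and the numerical identity $\lceil p\lambda p^e\rceil=\lceil\lambda p^{e+1}\rceil$, what you get is $\tau(\fra^{p\lambda})=\bigl(\fra^{\lceil\lambda p^{e+1}\rceil}\bigr)^{[1/p^e]}$ --- the root level stays $e$; it does not jump to $e+1$. Meanwhile $\tau(\fra^{\lambda})$, computed at level $e+1$, is $\bigl(\fra^{\lceil\lambda p^{e+1}\rceil}\bigr)^{[1/p^{e+1}]}$. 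So with $\frb=\fra^{\lceil\lambda p^{e+1}\rceil}$ the correct identifications are $\tau(\fra^{p\lambda})=\frb^{[1/p^e]}$ and $\tau(\fra^{\lambda})=\frb^{[1/p^{e+1}]}$, and the claim \eqref{eq_subadditivity} becomes $\frb^{[1/p^e]}\subseteq\bigl(\frb^{[1/p^{e+1}]}\bigr)^{[p]}$ --- the opposite of what you wrote. Your version, $\frb^{[1/p^{e+1}]}\subseteq\bigl(\frb^{[1/p^e]}\bigr)^{[p]}$, fails in general (e.g.\ $R=\FF_2[x]$, $\frb=(x^3)$, $e=1$: then $\frb^{[1/2]}=(x)$, $\frb^{[1/4]}=R$, and $R\not\subseteq(x^2)$). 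Correspondingly, the step ``$\frb\subseteq\bigl(\frb^{[1/p^e]}\bigr)^{[p^e]}$, hence $\frb\subseteq\bigl(\bigl(\frb^{[1/p^e]}\bigr)^{[p]}\bigr)^{[p^{e+1}]}$'' is a non sequitur: the target ideal equals $\bigl(\frb^{[1/p^e]}\bigr)^{[p^{e+2}]}$, which is \emph{smaller} than $\bigl(\frb^{[1/p^e]}\bigr)^{[p^e]}$, so the implication goes the wrong way.

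Once the levels are straightened out, your circle of ideas does work: start from $\frb\subseteq\bigl(\frb^{[1/p^{e+1}]}\bigr)^{[p^{e+1}]}=\Bigl(\bigl(\frb^{[1/p^{e+1}]}\bigr)^{[p]}\Bigr)^{[p^e]}$ and apply minimality of $(-)^{[1/p^e]}$ to conclude $\frb^{[1/p^e]}\subseteq\bigl(\frb^{[1/p^{e+1}]}\bigr)^{[p]}$, which is precisely \eqref{eq_subadditivity}. Likewise, for the second assertion, the hypothesis should read $\frb^{[1/p^e]}\subseteq J^{[p]}$; then $\frb\subseteq\bigl(\frb^{[1/p^e]}\bigr)^{[p^e]}\subseteq\bigl(J^{[p]}\bigr)^{[p^e]}=J^{[p^{e+1}]}$, and minimality of $(-)^{[1/p^{e+1}]}$ gives $\tau(\fra^{\lambda})=\frb^{[1/p^{e+1}]}\subseteq J$. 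As written, your second half starts from the identification $\tau(\fra^{p\lambda})=\frb^{[1/p^{e+1}]}$, which is in fact $\tau(\fra^{\lambda})$; so you are inadvertently assuming the stronger hypothesis $\tau(\fra^{\lambda})\subseteq J^{[p]}$, under which the conclusion $\tau(\fra^{\lambda})\subseteq J$ is immediate. Also note that your identity $\frb^{[1/p^{e+2}]}=\tau(\fra^{\lambda})$ is only an inclusion $\tau(\fra^{\lambda})\subseteq\frb^{[1/p^{e+2}]}$, since $\lceil\lambda p^{e+2}\rceil$ need not equal $\lceil\lambda p^{e+1}\rceil$ or $p\lceil\lambda p^{e+1}\rceil$; fortunately that inclusion is all the argument needs, but it is worth stating it as an inclusion rather than an equality.
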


\begin{proof}
If $e\gg 0$, then $\tau(\fra^{\lambda})=\left(\fra^{\lceil\lambda
p^{e+1}\rceil}\right)^{[1/p^{e+1}]}$ and
$\tau(\fra^{p\lambda})=\left(\fra^{\lceil\lambda
p^{e+1}\rceil}\right)^{[1/p^e]}$. By definition we have
$$\fra^{\lceil\lambda
p^{e+1}\rceil}\subseteq \left(\left((\fra^{\lceil\lambda
p^{e+1}\rceil})^{[1/p^{e+1}]}\right)^{[p]}\right)^{[p^e]},$$ hence
$\left(\fra^{\lceil\lambda p^{e+1}\rceil}\right)^{[1/p^e]}\subseteq
\left(\left(\fra^{\lceil\lambda
p^{e+1}\rceil}\right)^{[1/p^{e+1}]}\right)^{[p]}$, which gives
(\ref{eq_subadditivity}).

Suppose now that
$$\tau(\fra^{p\lambda})=\left(\fra^{\lceil \lambda p^{e+1}\rceil}\right)^{[1/p^e]} \subseteq
J^{[p]}.$$ It follows that $\fra^{\lceil\lambda
p^{e+1}\rceil}\subseteq (J^{[p]})^{[p^e]}=J^{[p^{e+1}]}$. Therefore
$\tau(\fra^{\lambda})=\left(\fra^{\lceil\lambda
p^{e+1}\rceil}\right)^{[1/p^{e+1}]}\subseteq J$.
\end{proof}

\begin{remark}
The above proposition gives another proof for the fact that if
$\lambda$ is an $F$-jumping exponent for an ideal $\fra$, then also
$p\lambda$ is an $F$-jumping exponent. More precisely, if
$\epsilon>0$ is such that
$\tau(\fra^{p\lambda-\epsilon})=\tau(\fra^{p\lambda})$, then
$\tau(\fra^{\lambda-\frac{\epsilon}{p}})=\tau(\fra^{\lambda})$.
\end{remark}

\smallskip

As we have already mentioned, there are many analogies between the
$F$-pure threshold and a characteristic zero invariant that is very
much studied, the log canonical threshold (see \cite{TW} and also
\cite{MTW}). However, in characteristic zero there is not much
difference in considering log canonical thresholds of principal or
of arbitrary ideals. This is not the case in characteristic $p$. For
example, every rational number $c$ is equal to $\fpt(\fra)$ for some
ideal $\fra$ in some polynomial ring $R$: if $c=\frac{n}{r}$, we may
take $R=k[x_1,\ldots,x_n]$ and $\fra= (x_1,\ldots,x_n)^r$. On the
other hand, as the following proposition shows, there are intervals
in $(0,1)$ containing no $F$-pure threshold of a principal ideal
\emph{in any dimension}. For example, there is no such $F$-pure
threshold in $\left(1-\frac{1}{p},1\right)$. The proposition follows
also from the results in Section \ref{sec.ratdisc} (see
Remark~\ref{intervals}), but we give below a direct argument.

\begin{proposition}\label{alternative}
Let $R$ be a regular $F$-finite ring of characteristic $p$, and
$f\in R$.
\begin{enumerate}
\item Let $\alpha=\frac{r}{p^e-1}$ for some positive $r$ and
$e$, and we put $\alpha_m=\left(1-\frac{1}{p^{me}}\right)\alpha$ for
$m\geq 0$. If there is an $F$-jumping exponent of $f$ in
$(\alpha_{m+1},\alpha_{m+2}]$, then there is an $F$-jumping exponent
of $f$ also in $(\alpha_m,\alpha_{m+1}]$.
\item For every $e\geq 1$ and every $0\leq a\leq p^e-1$, the
$F$-pure threshold $\fpt(f)$ does not lie in
$\left(\frac{a}{p^e},\frac{a}{p^e-1}\right)$.
\end{enumerate}
\end{proposition}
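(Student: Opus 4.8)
The plan is to prove part (i) first and then deduce part (ii) as a corollary, since the absence of an $F$-jumping exponent in $\left(\frac{a}{p^e},\frac{a}{p^e-1}\right)$ should follow from a telescoping application of (i) down to $m=0$, where the interval $(\alpha_0,\alpha_1]$ collapses.

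For part (i), I would argue contrapositively in the style of Remark~\ref{intervals}: it suffices to show that if there is \emph{no} $F$-jumping exponent of $f$ in $(\alpha_m,\alpha_{m+1}]$, then there is none in $(\alpha_{m+1},\alpha_{m+2}]$ either. The natural approach uses the $F$-threshold characterization from Remark~\ref{case_principal}, namely that $\nu(p^j)+1 = \lceil c^J(f) p^j\rceil$ for every ideal $J$ and every $j$. For a fixed ideal $J$ with $\fra = (f) \subseteq \mathrm{Rad}(J)$, I want to show: if $c^J(f) \notin (\alpha_m,\alpha_{m+1}]$ then $c^J(f) \notin (\alpha_{m+1},\alpha_{m+2}]$ — equivalently, translated through the identification $c^J(f) = \lim \nu(p^j)/p^j$, the gap-freeness propagates. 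Concretely, I would relate $\nu(p^{(m+1)e})$ and $\nu(p^{(m+2)e})$ via the principal-ideal inequality $f^r \in J^{[p^j]} \Rightarrow f^{pr} \in J^{[p^{j+1}]}$ iterated $e$ times, and compute exactly where $\alpha_m = \frac{r(p^{me}-1)}{p^{me}(p^e-1)}$ sits relative to the dyadic-type fractions $\frac{\nu(p^{(m+1)e})}{p^{(m+1)e}}$. The key arithmetic identity to exploit is $p^{(m+1)e}\alpha_{m+1} = p^e\cdot p^{me}\alpha_m \cdot \frac{p^{(m+1)e}-p^e}{\dots}$ — more cleanly, $\alpha_{m+1} = \frac{r}{p^{(m+1)e}} + \frac{\alpha_m \cdot p^{me}}{p^{(m+1)e}}\cdot(\text{stuff})$; I would just verify that $\lceil \alpha_{m+2} p^{(m+2)e}\rceil$ being $\le \nu(p^{(m+2)e})$ forces, after dividing the relevant membership by Frobenius $e$ times, that $\lceil \alpha_{m+1} p^{(m+1)e}\rceil \le \nu(p^{(m+1)e})$.

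Alternatively — and this may be the cleaner route — I would note that $\alpha_{m+1} = \frac{p^{(m+1)e}-1}{p^e-1}\cdot\frac{r}{p^{(m+1)e}}$ is itself of the form $\frac{N}{p^{(m+1)e}}$ with $N = \nu'+1$ a candidate threshold value, and use Lemma~\ref{lem0} plus Lemma~\ref{lem1} exactly as in the proof of Lemma~\ref{lem3}: membership questions about $\tau(f^{\alpha_{m}})$ become questions about $D_R^{(m+1)e}$-submodules of $R$ generated by explicit powers of $f$, and the inclusion $D_R^{(m+1)e} \subseteq D_R^{(m+2)e}$ gives the monotonicity that makes the interval $(\alpha_{m+1},\alpha_{m+2}]$ jump-free once $(\alpha_m,\alpha_{m+1}]$ is. This is really the same mechanism as Remark~\ref{intervals}, just organized one interval at a time, so I expect it to go through with the bookkeeping of exponents being the only delicate point.

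For part (ii): apply (i) repeatedly. Given $e \ge 1$ and $0 \le a \le p^e-1$, set $r = a$ and work with $\alpha = \frac{a}{p^e-1}$, so $\alpha_m = \left(1-\frac{1}{p^{me}}\right)\frac{a}{p^e-1}$; note $\alpha_0 = 0$ and $\alpha_1 = \frac{a}{p^e}$, while $\alpha_\infty = \alpha = \frac{a}{p^e-1}$. The interval $\left(\frac{a}{p^e},\frac{a}{p^e-1}\right) = (\alpha_1,\alpha)$ is the increasing union $\bigcup_{m\ge 1}(\alpha_m,\alpha_{m+1}]$. If $\fpt(f)$ lay in this interval, it would lie in some $(\alpha_{m+1},\alpha_{m+2}]$; being a jumping exponent, part (i) then pushes a jumping exponent into $(\alpha_m,\alpha_{m+1}]$, and inductively down into $(\alpha_0,\alpha_1] = (0,\frac{a}{p^e}]$ — but every jumping exponent produced along the way is $\le \alpha_{m+1} < \fpt(f)$, contradicting that $\fpt(f)$ is the \emph{smallest} positive $F$-jumping exponent. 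The main obstacle is purely the exponent arithmetic in part (i) — correctly tracking $\lceil \alpha_i p^{ie}\rceil$ through the Frobenius flatness step so that the implication lands in exactly the next interval down and not, say, two intervals down or with an off-by-one slippage at the right endpoint.
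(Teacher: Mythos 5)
Your part (ii) argument is correct and matches the paper's: once (i) is available, a single application already produces a positive $F$-jumping exponent strictly less than $\fpt(f)$, contradicting minimality, so the descent all the way to $(\alpha_0,\alpha_1]$ is harmless but not needed. The genuine gap is in part (i), which you sketch but never complete, and you say so yourself (``the main obstacle is purely the exponent arithmetic''). Of your two proposed routes, the second one (Lemma~\ref{lem0}, Lemma~\ref{lem1}, and $D_R^{(m+1)e}\subseteq D_R^{(m+2)e}$) is just Remark~\ref{intervals} restated; the paper points out explicitly that Proposition~\ref{alternative} is offered precisely to give a \emph{direct} argument that avoids rederiving that remark, so while logically valid this route sidesteps the point. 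Your first route (tracking $\nu(p^{je})$ through iterated Frobenius) is plausible but not carried out, and the bookkeeping you flag is real.

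The one-line idea that eliminates the bookkeeping is the affine shift $\lambda\mapsto p^e\lambda - r$. A direct computation from $\alpha=r/(p^e-1)$ gives $p^e\alpha_{m+1}-r=\alpha_m$ for every $m\ge 0$, so this increasing affine map carries $(\alpha_{m+1},\alpha_{m+2}]$ onto $(\alpha_m,\alpha_{m+1}]$. On the other hand, any $\lambda\in(\alpha_{m+1},\alpha_{m+2}]$ satisfies $\lambda>\alpha_1=r/p^e$, hence $p^e\lambda>r$, and the two standard facts recalled at the start of Section~\ref{sec.ratdisc} --- that $p\lambda$ is a jumping exponent whenever $\lambda$ is, and that for $\mu\ge 1$ the number $\mu$ is a jumping exponent of $f$ if and only if $\mu-1$ is --- show that $p^e\lambda-r$ is a positive $F$-jumping exponent whenever $\lambda$ is. That is exactly part (i). Your $\nu$-based approach is a lower-level manifestation of this same shift, but the clean statement is at the level of jumping exponents rather than the $\nu(p^j)$'s, and once phrased that way the delicate arithmetic disappears.
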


Note that (ii) gives for every $e$ open intervals of total length
$\sum_{0\leq i\leq p^e-1} i/p^e(p^e-1)=1/2$ containing no $F$-pure
threshold of a principal ideal in characteristic $p$. One should
compare this with the characteristic zero case, when every $c\in
(0,1]$ is the log canonical threshold of some hypersurface: for
example, if $c=\frac{n}{r}\leq 1$, then $c$ is the log canonical
threshold of $\sum_{i=1}^nx_i^r$.

\begin{proof}[Proof of Proposition~\ref{alternative}]
The assertion in (i) follows from the fact that if $\lambda\in
(\alpha_{m+1},\alpha_{m+2}]$, then $p^e\lambda-r\in
(\alpha_m,\alpha_{m+1}]$, and we have seen that $p^e\lambda-r$ is an
$F$-jumping exponent if $\lambda$ is. In particular, we see that if
$\lambda$ is an $F$-jumping exponent in
$\left(\alpha_1,\alpha\right)$, then there is another positive
$F$-jumping exponent $<\lambda$. Hence $\lambda$ is not the $F$-pure
threshold of $f$.
\end{proof}

\smallskip

Motivated by the analogy with some important conjectures on log
canonical thresholds in characteristic zero (see \cite{kollar}, \S
8) we make the following conjectures on $F$-pure thresholds.

\begin{conjecture}\label{ACC}
For every prime $p$ and every $n$, the set ${\mathcal T}_n$
satisfies ACC (the Ascending Chain Condition), i.e. it contains no
strictly increasing sequences.
\end{conjecture}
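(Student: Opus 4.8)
The plan is to reduce Conjecture~\ref{ACC} to a uniform, family-wide non-accumulation statement for $F$-pure thresholds, and then to attack that statement by induction on $n$, in the spirit of the characteristic-zero proof of the ACC for log canonical thresholds (cf.\ \cite{kollar}). First I would record the elementary reductions furnished by the results above. Since $\fpt(g)\le 1$ for every non-invertible $g$ in a domain, we have ${\mathcal T}_n\subseteq[0,1]$; by Theorem~\ref{theorem3_introduction} the set ${\mathcal T}_n$ is closed, and by Theorem~\ref{theorem_introduction} it is contained in $\QQ$. Hence an ACC violation is a strictly increasing sequence $c_1<c_2<\cdots$ in ${\mathcal T}_n$ converging to some $c\in{\mathcal T}_n\cap\QQ$, with $c_m<c$ for all $m$. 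Fixing an algebraically closed field $k$ of characteristic $p$ and using that ${\mathcal T}_n^{\circ}={\mathcal T}_n^{\circ}(k)$ is dense in ${\mathcal T}_n$ (again Theorem~\ref{theorem3_introduction}), I would replace each $c_m$ by an element of ${\mathcal T}_n^{\circ}$ within distance $(c-c_m)/2$ of it; the new values still lie below $c$ and still converge to $c$, so after passing to a subsequence one again has a strictly increasing sequence, which we may write as $c_m=\fpt_0(f_m)$ with $f_m\in k[x_1,\ldots,x_n]$ and $f_m(0)=0$. The goal is to contradict the existence of such a family.

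Two tools from the paper are available at this stage, but seeing that neither finishes the argument by itself is the key to locating the real difficulty. On the one hand, the non-standard machinery of Section~\ref{sec.limFthr} produces, from $F=[f_m]$, a power series $f\in(\ltightstar{k})\llbracket x_1,\ldots,x_n\rrbracket$ with $\fpt(f)=\sh(\ltightstar{\fpt_0}(F))=\sh([c_m])=c$, so the limit value is realized as an $F$-pure threshold in dimension $n$; but $c=\fpt(f)$ is the \emph{smallest} positive $F$-jumping exponent of $f$, so packaging the $c_m$ into $\fpt(f)$ does not by itself exhibit $F$-jumping exponents of one hypersurface accumulating at $c$ from below, which is what the finite-length arguments of Section~\ref{sec.ratdisc} (Corollary~\ref{cor1}, Theorem~\ref{thm4}, Remark~\ref{intervals}) would rule out. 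On the other hand, Proposition~\ref{alternative} removes certain open intervals from the set of $F$-pure thresholds of principal ideals in every dimension, but these exclusions alone are far from forcing ACC: the strictly increasing sequence $c_k=(p^k-1)/(p^{k+1}-1)$ (converging to $1/p$) avoids every interval excluded by that proposition, so whatever obstruction rules out an increasing sequence inside a single ${\mathcal T}_n$ must be substantially more delicate.

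The substantive part of the plan is therefore to prove a uniform non-accumulation statement --- no rational number is a limit from below of $F$-pure thresholds of hypersurfaces of dimension $\le n$ --- by induction on $n$, imitating the characteristic-zero proof of the ACC for log canonical thresholds. The natural inductive device is the Restriction Theorem for test ideals, which, as noted in the Introduction, holds in this setting and is elementary: restricting $f$ to a general hyperplane through the origin should bound $\fpt(f)$ in terms of the $F$-pure threshold of a hypersurface in one fewer variable, so that a strictly increasing sequence in dimension $n$ descends to one in dimension $n-1$, contradicting the inductive hypothesis. The base case $n=1$ is immediate, since over a regular one-dimensional $F$-finite domain $\fpt(g)$ has the form $1/m$ and the set $\{1/m : m\ge 1\}$ manifestly satisfies ACC. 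I expect the comparison step to be the main obstacle: one must control both the direction of the inequality and the loss incurred in passing from $\fpt(f)$ to the $F$-pure threshold of a general restriction, and in characteristic zero the analogous transfer rests on adjunction, boundedness of the relevant family, and the ``global'' ACC --- none of which has a ready-made counterpart in characteristic $p$. In particular, bounding the degrees of the $f_m$ (hence, by \cite{BMS}, the denominators of their $F$-jumping numbers) only makes the set of thresholds of bounded degree finite and does nothing against sequences of unbounded degree, and $F$-pure thresholds in a fixed dimension genuinely have unbounded denominators. Making the restriction step effective, or finding a characteristic-$p$ substitute for the boundedness input, is the crux, and is exactly what keeps Conjecture~\ref{ACC} open.
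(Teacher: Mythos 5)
The statement you were asked about is Conjecture~\ref{ACC}, and the paper offers no proof of it; it is stated as an open problem, motivated by analogy with the ACC conjectures for log canonical thresholds in characteristic zero. There is therefore no ``paper's own proof'' to compare against, and your proposal is correctly calibrated: you do not claim a proof, you lay out the natural plan of attack, and you identify precisely where it stalls. That honest assessment matches the paper's own stance.

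Your analysis of what the existing machinery does and does not give is accurate and worth emphasizing. You are right that the nonstandard construction of Section~\ref{sec.limFthr} only realizes the limit $c$ as $\fpt(f)$ for a single power series $f$; since $\fpt(f)$ is the \emph{smallest} positive $F$-jumping exponent of that $f$, the sequence $c_m=\fpt_0(f_m)$ does not become a sequence of $F$-jumping exponents of $f$ accumulating at $c$ from below, which is the only situation that Corollary~\ref{cor1} and Theorem~\ref{thm4} rule out. You are also right that Proposition~\ref{alternative} merely excludes open intervals and cannot by itself force ACC (indeed, if it could, the paper would not have left the conjecture open); exhibiting a candidate increasing sequence such as $(p^k-1)/(p^{k+1}-1)\to 1/p$ that threads through the allowed gaps makes the point vivid, though the conclusion does not depend on verifying that example in full. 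Your base case $n=1$ is correct, since over a DVR every $\fpt$ is of the form $1/m$. The genuine gap, which you name yourself, is the comparison step in the inductive restriction argument: there is at present no characteristic-$p$ substitute for the adjunction-plus-boundedness inputs that drive the characteristic-zero ACC proof, and bounding degrees (hence denominators, via \cite{BMS}) only controls thresholds of bounded complexity, not a general sequence. Nothing in the paper fills that gap, so the conjecture remains open, exactly as you say.
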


\begin{remark}\label{truncation}
Note that if $f\in k\llbracket x_1,\ldots,x_n\rrbracket$ lies in the
maximal ideal, then Corollary~\ref{cor3} implies that whenever
$f-g\in (x_1,\ldots,x_n)^d$, we have $|\fpt(f)-\fpt(g)|\leq n/d$.
Therefore the above conjecture predicts that given $f$, there is $d$
such that $\fpt(f)\geq \fpt(f+h)$ for all $h\in (x_1,\ldots,x_n)^d$.
However, even this special case is not known.
\end{remark}

\begin{conjecture}\label{limit1}
For every prime $p$ and every $n\geq 1$, the accumulation points of
${\mathcal T}_n$ are contained in ${\mathcal T}_{n-1}$ ${\rm (}$by
convention, ${\mathcal T}_0=\{0\}$${\rm )}$.
\end{conjecture}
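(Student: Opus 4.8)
The plan is to run the argument of Theorem~\ref{theorem3_introduction}, isolating the one extra input responsible for the drop in dimension. Since ${\mathcal T}_n$ is the closure of ${\mathcal T}_n^{\circ}$ by Theorem~\ref{theorem3_introduction}, its accumulation points are exactly the accumulation points of ${\mathcal T}_n^{\circ}$; so fix such a point $\alpha$, realized by a strictly monotone sequence $f_m\in k[x_1,\ldots,x_n]$ with $f_m(0)=0$, $\fpt_0(f_m)\to\alpha$, and $\fpt_0(f_m)\ne\alpha$ for every $m$. As in Section~\ref{sec.limFthr}, form the internal hyperpolynomial $F=[f_m]$ and the associated power series $f\in(\ltightstar{k})\llbracket x_1,\ldots,x_n\rrbracket$; the proof of Theorem~\ref{theorem3_introduction} shows $\fpt(f)=\sh([\fpt_0(f_m)])=\alpha$. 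The entire content of the conjecture is then to extract from $f$ a nonzero power series $\bar f$ in at most $n-1$ variables over an $F$-finite field with $\fpt(\bar f)=\alpha$, since such a $\bar f$ witnesses $\alpha\in{\mathcal T}_{n-1}$ directly from the definition.

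The mechanism I would try to exploit is that $F$ has infinite internal degree $[\deg f_m]$ --- the degrees $\deg f_m$ must tend to infinity, because $F$-pure thresholds of polynomials of bounded degree in $n$ variables form a finite set by Proposition~3.6 of \cite{BMS} and so cannot accumulate --- so that passing from $F$ to the power series $f$ discards all the high-degree behaviour of the $f_m$, and the accumulation hypothesis should force the singularity of $f_m$ at the origin to keep migrating to higher degree, hence force $f$ to degenerate. In the model case $f_m=x_1^a+x_n^{\,m}$ one has $\fpt_0(f_m)\to 1/a$ while the associated power series is literally $f=x_1^a$, which involves only one variable. In general I would aim to show that, after an automorphism of $(\ltightstar{k})\llbracket x_1,\ldots,x_n\rrbracket$, either $f$ does not involve $x_n$, or at least $\fpt(f)=\fpt(f|_H)$ for the smooth hyperplane section $H=\{x_n=0\}$. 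The restriction theorem for test ideals already supplies the inequality $\fpt(f|_H)\le\fpt(f)$ whenever $H$ is smooth and is not a component of the zero locus; if one can find an $H$ realizing equality, then repeating the construction of Section~\ref{sec.limFthr} in $n-1$ variables gives $\alpha=\fpt(f|_H)\in{\mathcal T}_{n-1}$.

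The main obstacle is exactly this last step. There is no known positive-characteristic inversion of adjunction precise enough to recognise when the limiting threshold is attained on a hyperplane section, and a generic hyperplane section typically decreases the $F$-pure threshold strictly: restricting a plane curve singularity to a general line through the origin produces $F$-pure threshold $1/(\text{multiplicity})$, which is usually smaller than the threshold of the curve itself. One must therefore either choose $H$ carefully, using the internal structure of $F$ over $\ltightstar{k}$, or prove a semicontinuity statement for $\fpt$ in families strong enough to force the limiting singularity onto a hyperplane. This is the $F$-pure threshold analogue of a characteristic-zero result of de Fernex and Musta\c{t}\v{a} \cite{dFM}, and --- in view of the more elementary definition of the $F$-pure threshold --- one might hope it is more tractable; but imitating their non-standard argument would require $F$-singularity analogues, a sufficiently strong restriction theorem together with semicontinuity of $\fpt$ in families, that are not currently available, so I expect that resolving the conjecture in full will require a genuinely new idea. (For $n=1$ it is trivial: ${\mathcal T}_1=\{1/a\mid a\ge 1\}\cup\{0\}$, whose only accumulation point is $0\in{\mathcal T}_0$.)
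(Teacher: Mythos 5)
This statement is labeled \emph{Conjecture}~\ref{limit1} in the paper; the authors supply no proof, and to date no proof is known. You correctly recognize this and say so, which is the right call. Your submission is not a proof but an honest analysis of why the obvious strategy stalls, and that analysis is sound.

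A few specific checks. The reduction to accumulation points of ${\mathcal T}_n^{\circ}$ via Theorem~\ref{theorem3_introduction} is correct (accumulation points of a set and of its closure agree). The observation that the degrees $\deg f_m$ must be unbounded follows correctly from the boundedness of denominators of $F$-jumping exponents for polynomials of bounded degree (\cite[Proposition~3.6]{BMS}). Your model case $f_m = x_1^a + x_n^m$ is computed correctly: for any fixed standard multi-index the coefficient in $f_m$ is eventually zero unless the multi-index is $(a,0,\ldots,0)$, so the non-standard power series is literally $x_1^a$, and $\fpt$ drops into ${\mathcal T}_1 \subseteq {\mathcal T}_{n-1}$. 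The $n=1$ base case ${\mathcal T}_1 = \{1/a : a\geq 1\}\cup\{0\}$ is also right. The gap you single out --- that the restriction theorem only gives $\fpt(f|_H)\leq\fpt(f)$, with strict inequality being typical, and that no inversion-of-adjunction statement in positive characteristic is currently strong enough to locate a hyperplane realizing equality in the limit --- is exactly the missing ingredient. The analogy with de Fernex--Musta\c{t}\u{a} \cite{dFM} for log canonical thresholds is the correct reference point, and the remark that one would need $F$-singularity analogues of their restriction and semicontinuity inputs is an accurate diagnosis. In short: no proof exists in the paper, and your answer correctly identifies both that fact and the technical obstruction.
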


We have seen in the previous section that the set ${\mathcal T}_n$
is the closure of ${\mathcal T}_n^{\circ}$. In fact, we make also
the following conjecture.

\begin{conjecture}\label{conj3}
For every prime $p$ and every $n$, every $F$-pure threshold in
dimension $\leq n$ can be obtained as the $F$-pure threshold at the
origin of some polynomial in $k[x_1,\ldots,x_n]$, i.e.
 we have ${\mathcal
T}_n={\mathcal T}_n^{\circ}$.
\end{conjecture}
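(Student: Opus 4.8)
The plan is to prove Conjecture~\ref{conj3} --- which asserts ${\mathcal T}_n = {\mathcal T}_n^{\circ}$ --- by upgrading the proof of Theorem~\ref{theorem3_introduction}. Recall that in that proof we associated to a sequence $f_m \in k[x_1,\ldots,x_n]$ with $\lim_m \fpt_0(f_m) = \alpha$ a formal power series $f \in (\lstar{k})\llbracket x_1,\ldots,x_n\rrbracket$ with $\fpt(f) = \alpha$. So we already know that $\overline{{\mathcal T}_n^{\circ}} = {\mathcal T}_n$; what remains is to show that this closure adds nothing, i.e. that ${\mathcal T}_n^{\circ}$ is already closed. Equivalently, I would show that given any $f \in k\llbracket x_1,\ldots,x_n\rrbracket$ with $f(0)=0$, its $F$-pure threshold $\fpt(f)$ is realized as $\fpt_0(g)$ for some honest polynomial $g \in k'[x_1,\ldots,x_n]$ over some (algebraically closed) field $k'$ of the same characteristic.

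The key idea is that $\fpt(f)$ depends only on a truncation of $f$. First I would combine the denominator bound for $F$-jumping numbers in bounded dimension (Proposition~3.6 in \cite{BMS}, used in the proof of Theorem~\ref{thm5}(iii)) with the estimate from Corollary~\ref{cor3}: since $\fpt(f) \in A$ for a fixed finite set $A$ of rationals depending only on the truncation degree and on $n$ and $p$, once the approximation error $n/(d+1)$ from Corollary~\ref{cor3} is smaller than the gap between consecutive elements of $A$ that could be realized by degree-$\leq d$ polynomials, the value $\fpt(f)$ must equal $\fpt(f_{\leq d})$ exactly (this is precisely the bootstrapping step already used at the end of the proof of Theorem~\ref{theorem3_introduction} to pass from the shadow statement to an exact equality $\fpt(f_{\leq d}) = \lstar{\fpt_0}(g)$). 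The subtlety is circularity: the finite set $A$ and hence the required degree $d_0$ depend only on $d_0$ itself through $n$ and $p$, so one must check that the implication ``error for degree $d$ beats the gap for degree $d$'' stabilizes --- but the gap is bounded below by a quantity depending only on the common bounded denominator $D(d,n,p)$, namely $1/D(d,n,p)^2$, while the denominators grow at most like some explicit function of $d$; as long as $n/(d+1)$ eventually drops below $1/D(d,n,p)^2$, which it does since $D(d,n,p)$ has polynomial-type growth in $d$ while we get to choose $d$, we obtain a genuine $d_0$ with $\fpt(f) = \fpt(f_{\leq d_0})$.

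Once that is established, the rest is formal: $f_{\leq d_0}$ is a polynomial, but over a possibly ``large'' field (a field with $[k:k^p]<\infty$, from Cohen's theorem, or over $\lstar{k}$ in the ultraproduct setup). By Theorem~\ref{thm5}(ii), $\fpt_0$ of a polynomial does not change under field extension, and by Theorem~\ref{thm5}(iii) the set ${\mathcal T}_n^{\circ}(k')$ is the same for every algebraically closed $k'$ of characteristic $p$; so embedding the base field into its algebraic closure (or into $\overline{\lstar{k}}$) and invoking these independence statements, we conclude $\fpt(f) = \fpt_0(f_{\leq d_0}) \in {\mathcal T}_n^{\circ}$. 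Putting this together with Theorem~\ref{thm5}(iv) (which gives ${\mathcal T}_n \subseteq \overline{{\mathcal T}_n^{\circ}}$) and the just-proved fact that ${\mathcal T}_n^{\circ}$ is closed, we get ${\mathcal T}_n \subseteq {\mathcal T}_n^{\circ}$; the reverse inclusion ${\mathcal T}_n^{\circ} \subseteq {\mathcal T}_n$ is immediate from the definitions, since $k[x_1,\ldots,x_n]_{(x_1,\ldots,x_n)}$ is an $F$-finite regular local domain of dimension $\leq n$.

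The main obstacle, and the reason I would expect this to be genuinely hard rather than routine, is the circular dependence described above: one wants to say ``$\fpt(f)$ is determined by $f_{\leq d}$ for $d$ large,'' but how large $d$ must be is controlled by a finite set of candidate values whose size itself grows with $d$. Making this effective requires a quantitative handle on how fast the denominators of $F$-jumping numbers of degree-$\leq d$ polynomials in $n$ variables can grow with $d$ --- something Proposition~3.6 in \cite{BMS} gives only qualitatively. If that growth rate is, say, $p^{O(d)}$ (as one might fear from $\nu(p^e)/p^e$ approximations), then $n/(d+1)$ will never beat $1/D(d,n,p)^2 \sim p^{-O(d)}$, and this line of argument collapses; one would then genuinely need a new idea, perhaps approximating $f$ by a polynomial of the \emph{same} $F$-pure threshold via a more clever (non-truncation) construction, e.g. a Bertini-type or generic-linkage argument on the affine space $\AAA^N$ parametrizing bounded-degree polynomials. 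This is presumably why the statement is left as Conjecture~\ref{conj3} rather than proved.
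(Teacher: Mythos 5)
This statement is a \emph{conjecture} in the paper; the authors explicitly leave it open and do not give a proof. So your proposal cannot be ``the same approach as the paper'' --- there is no proof in the paper to compare against, and your review should acknowledge this. That said, your proposal does make an incorrect inference at its central step, and it is worth naming precisely where it fails, especially since you only half-identify the gap yourself.

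The fatal step is the assertion ``since $\fpt(f)\in A$ for a fixed finite set $A$ of rationals depending only on the truncation degree and on $n$ and $p$.'' This is not justified, and in fact it is close to being equivalent to what you are trying to prove. Proposition~3.6 of \cite{BMS} bounds the denominators of $F$-jumping exponents of \emph{polynomials} of bounded degree; it says nothing about the denominator of $\fpt(f)$ when $f$ is a genuine power series. The truncations $f_{\le d}$ have $\fpt_0(f_{\le d})$ in a finite set $A(d)$ by that result, and by Corollary~\ref{cor3} these values converge to $\fpt(f)$, but this only re-proves that $\fpt(f)$ is a \emph{limit point} of $\bigcup_d A(d)$, which you already know from Theorem~\ref{theorem3_introduction}. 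To conclude $\fpt(f)=\fpt_0(f_{\le d_0})$ for some $d_0$, you would need $\fpt(f)$ itself to lie in some $A(d_0)$ --- i.e., to have denominator bounded in terms of a finite truncation degree --- and that is precisely the missing content of the conjecture, not something you can assume. So the issue is not (only) the quantitative circularity you flag about the growth of $D(d,n,p)$; even with the most optimistic growth rate, the argument never establishes membership of $\fpt(f)$ in any $A(d)$ in the first place.

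The paper itself signals how hard the needed step is. Remark~\ref{truncation} points out that even the much weaker assertion --- that for a given $f\in k\llbracket x_1,\ldots,x_n\rrbracket$ there exists $d$ with $\fpt(f)\ge\fpt(f+h)$ for all $h\in(x_1,\ldots,x_n)^d$ --- is an open problem (it would follow from the ACC Conjecture~\ref{ACC}). Your truncation strategy would need something strictly stronger: not merely that high-order perturbations cannot raise $\fpt$, but that they eventually do not change it at all. The surrounding pieces of your write-up (reduction to showing $\mathcal{T}_n^{\circ}$ is closed via Theorem~\ref{thm5}, field-independence via Theorem~\ref{thm5}(ii)--(iii), the easy inclusion $\mathcal{T}_n^{\circ}\subseteq\mathcal{T}_n$) are all correct and match the framing the paper sets up; it is the one bridging step, from power series back to a finite-degree polynomial with the \emph{same} $F$-pure threshold, that is missing and that the authors leave open. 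Your closing paragraph, where you suspect a new idea is needed, is the honest assessment; the part before it should not be presented as a proof sketch with a mere ``subtlety,'' because the subtlety is the whole theorem.
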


\providecommand{\bysame}{\leavevmode \hbox \o3em
{\hrulefill}\thinspace}


\end{document}